\tikzset{
>=stealth',
help lines/.style={dashed, thick},
axis/.style={<->},
important line/.style={thick},
connection/.style={thick, dotted},
}
\newcommand{\nc}{\newcommand}
\nc{\rnc}{\renewcommand}
\nc{\bb}[1]{{\mathbb #1}}
\nc{\bbA}{\bb{A}}\nc{\bbB}{\bb{B}}\nc{\bbC}{\bb{C}}\nc{\bbD}{\bb{D}}
\nc{\bbE}{\bb{E}}\nc{\bbF}{\bb{F}}\nc{\bbG}{\bb{G}}\nc{\bbH}{\bb{H}}
\nc{\bbI}{\bb{I}}\nc{\bbJ}{\bb{J}}\nc{\bbK}{\bb{K}}\nc{\bbL}{\bb{L}}
\nc{\bbM}{\bb{M}}\nc{\bbN}{\bb{N}}\nc{\bbO}{\bb{O}}\nc{\bbP}{\bb{P}}
\nc{\bbQ}{\bb{Q}}\nc{\bbR}{\bb{R}}\nc{\bbS}{\bb{S}}\nc{\bbT}{\bb{T}}
\nc{\bbU}{\bb{U}}\nc{\bbV}{\bb{V}}\nc{\bbW}{\bb{W}}\nc{\bbX}{\bb{X}}
\nc{\bbY}{\bb{Y}}\nc{\bbZ}{\bb{Z}}
\nc{\mbf}[1]{{\mathbf #1}}
\nc{\bfA}{\mbf{A}}\nc{\bfB}{\mbf{B}}\nc{\bfC}{\mbf{C}}\nc{\bfD}{\mbf{D}}
\nc{\bfE}{\mbf{E}}\nc{\bfF}{\mbf{F}}\nc{\bfG}{\mbf{G}}\nc{\bfH}{\mbf{H}}
\nc{\bfI}{\mbf{I}}\nc{\bfJ}{\mbf{J}}\nc{\bfK}{\mbf{K}}\nc{\bfL}{\mbf{L}}
\nc{\bfM}{\mbf{M}}\nc{\bfN}{\mbf{N}}\nc{\bfO}{\mbf{O}}\nc{\bfP}{\mbf{P}}
\nc{\bfQ}{\mbf{Q}}\nc{\bfR}{\mbf{R}}\nc{\bfS}{\mbf{S}}\nc{\bfT}{\mbf{T}}
\nc{\bfU}{\mbf{U}}\nc{\bfV}{\mbf{V}}\nc{\bfW}{\mbf{W}}\nc{\bfX}{\mbf{X}}
\nc{\bfY}{\mbf{Y}}\nc{\bfZ}{\mbf{Z}}
\nc{\bfa}{\mbf{a}}\nc{\bfb}{\mbf{b}}\nc{\bfc}{\mbf{c}}\nc{\bfd}{\mbf{d}}
\nc{\bfe}{\mbf{e}}\nc{\bff}{\mbf{f}}\nc{\bfg}{\mbf{g}}\nc{\bfh}{\mbf{h}}
\nc{\bfi}{\mbf{i}}\nc{\bfj}{\mbf{j}}\nc{\bfk}{\mbf{k}}\nc{\bfl}{\mbf{l}}
\nc{\bfm}{\mbf{m}}\nc{\bfn}{\mbf{n}}\nc{\bfo}{\mbf{o}}\nc{\bfp}{\mbf{p}}
\nc{\bfq}{\mbf{q}}\nc{\bfr}{\mbf{r}}\nc{\bfs}{\mbf{s}}\nc{\bft}{\mbf{t}}
\nc{\bfu}{\mbf{u}}\nc{\bfv}{\mbf{v}}\nc{\bfw}{\mbf{w}}\nc{\bfx}{\mbf{x}}
\nc{\bfy}{\mbf{y}}\nc{\bfz}{\mbf{z}}
\nc{\mcal}[1]{{\mathcal #1}}
\nc{\calA}{\mcal{A}}\nc{\calB}{\mcal{B}}\nc{\calC}{\mcal{C}}\nc{\calD}{\mcal{D}}
\nc{\calE}{\mcal{E}} \nc{\calF}{\mcal{F}}\nc{\calG}{\mcal{G}}\nc{\calH}{\mcal{H}}
\nc{\calI}{\mcal{I}}\nc{\calJ}{\mcal{J}}\nc{\calK}{\mcal{K}}\nc{\calL}{\mcal{L}}
\nc{\calM}{\mcal{M}}\nc{\calN}{\mcal{N}}\nc{\calO}{\mcal{O}}\nc{\calP}{\mcal{P}}
\nc{\calQ}{\mcal{Q}}\nc{\calR}{\mcal{R}}\nc{\calS}{\mcal{S}}\nc{\calT}{\mcal{T}}
\nc{\calU}{\mcal{U}}\nc{\calV}{\mcal{V}}\nc{\calW}{\mcal{W}}\nc{\calX}{\mcal{X}}
\nc{\calY}{\mcal{Y}}\nc{\calZ}{\mcal{Z}}
\nc{\fA}{\frak{A}}\nc{\fB}{\frak{B}}\nc{\fC}{\frak{C}} \nc{\fD}{\frak{D}}
\nc{\fE}{\frak{E}}\nc{\fF}{\frak{F}}\nc{\fG}{\frak{G}}\nc{\fH}{\frak{H}}
\nc{\fI}{\frak{I}}\nc{\fJ}{\frak{J}}\nc{\fK}{\frak{K}}\nc{\fL}{\frak{L}}
\nc{\fM}{\frak{M}}\nc{\fN}{\frak{N}}\nc{\fO}{\frak{O}}\nc{\fP}{\frak{P}}
\nc{\fQ}{\frak{Q}}\nc{\fR}{\frak{R}}\nc{\fS}{\frak{S}}\nc{\fT}{\frak{T}}
\nc{\fU}{\frak{U}}\nc{\fV}{\frak{V}}\nc{\fW}{\frak{W}}\nc{\fX}{\frak{X}}
\nc{\fY}{\frak{Y}}\nc{\fZ}{\frak{Z}}
\nc{\fa}{\frak{a}}\nc{\fb}{\frak{b}}\nc{\fc}{\frak{c}} \nc{\fd}{\frak{d}}
\nc{\fe}{\frak{e}}\nc{\fFf}{\frak{f}}\nc{\fg}{\frak{g}}\nc{\fh}{\frak{h}}
\nc{\fri}{\frak{i}}\nc{\fj}{\frak{j}}\nc{\fk}{\frak{k}}\nc{\fl}{\frak{l}}
\nc{\fm}{\frak{m}}\nc{\fn}{\frak{n}}\nc{\fo}{\frak{o}}\nc{\fp}{\frak{p}}
\nc{\fq}{\frak{q}}\nc{\fr}{\frak{r}}\nc{\fs}{\frak{s}}\nc{\ft}{\frak{t}}
\nc{\fu}{\frak{u}}\nc{\fv}{\frak{v}}\nc{\fw}{\frak{w}}\nc{\fx}{\frak{x}}
\nc{\fy}{\frak{y}}\nc{\fz}{\frak{z}}
\newtheorem{theorem}{Theorem}[section]
\newtheorem{lemma}[theorem]{Lemma}
\newtheorem{corollary}[theorem]{Corollary}
\newtheorem{prop}[theorem]{Proposition}
\theoremstyle{definition}
\newtheorem{definition}[theorem]{Definition}
\newtheorem{example}[theorem]{Example}
\newtheorem{remark}[theorem]{Remark}
 \DeclareMathOperator{\id}{id}
 \DeclareMathOperator{\Sym}{Sym}
\DeclareMathOperator{\Hom}{{Hom}}
\DeclareMathOperator{\Gr}{Gr}
\DeclareMathOperator{\SL}{SL}
\newcommand{\cQ}{\mathcal {Q}}
\newcommand{\pt}{\text{pt}}
\newcommand{\fraks}{\mathfrak{s}}
\newcommand{\al}{\alpha}
\newcommand{\la}{\lambda}
\newcommand{\ka}{\kappa}
\newcommand{\de}{\delta}
\newcommand{\be}{\beta}
\DeclareMathOperator{\Kr}{{Kr}}
\newcommand{\unit}{\mathbf{1}}
\newcommand{\ep}{\epsilon}
\newcommand{\hatX}{\hat{X}}
\newcommand{\hatQ}{{\hat{\cQ}}}
\newcommand{\hatS}{{\hat{S}}}
\newcommand{\hatD}{{\hat{\bfD}}}
\DeclareMathOperator{\pr}{pr}
\newcommand{\frakp}{\mathfrak{p}}
\newcommand{\frakq}{\mathfrak{q}}
\newcommand{\aff}{{\operatorname{a}}}
\newcommand{\inv}{{\operatorname{inv}}}
\newcommand{\frakX}{\mathfrak{X}}
\newcommand{\frakd}{\mathfrak{d}}
 \gdef\Young(#1){\hbox{$\vcenter
 {\mathcode`,="8000\mathcode`|="8000
  \def,{\global\advance\cols by 1 &}%
  \def|{\cr
        \multispan{\the\cols}\hrulefill\cr
        &\global\cols=2 }%
  \offinterlineskip\everycr{}\tabskip=0pt
  \dimen0=\ht\strutbox \advance\dimen0 by \dp\strutbox
  \halign
   {\vrule height \ht\strutbox depth \dp\strutbox##
    &&\hbox to \dimen0{\hss$##$\hss}\vrule\cr
    \noalign{\hrule}&\global\cols=2 #1\crcr
    \multispan{\the\cols}\hrulefill\cr%
   }
 }$}}
\title[Affine Grassmannian]
{Equivariant oriented homology of the affine Grassmannian}
\author[C.~Zhong]{Changlong~Zhong}
\address{State University of New York at Albany, 1400 Washington Ave, CK399, Albany, NY, 12222}
\email{czhong@albany.edu}
\date{}
\begin{document}
\maketitle
\begin{abstract}We generalize the property of small-torus equivariant K-homology of the affine Grassmannian to general oriented (co)homology theory in the sense of Levine and Morel. The main tool we use is the formal affine Demazure algebra associated to the affine root system. More precisely, we prove that the small-torus equivariant oriented cohomology of the affine Grassmannian satisfies the GKM condition. We also show that its dual, the small-torus equivariant homology, is isomorphic to the centralizer of the equivariant oriented cohomology of a point in the  the formal affine Demazure algebra. 
\end{abstract}

\section{Introduction}
Let $h$ be an oriented cohomology theory in the sense of Levine and Morel. Let $G$ be a semi-simple linear algebraic group over $\bbC$ with maximal torus $T$ and a Borel subgroup $B$. Let $\Gr_G$ be the affine Grassmannian of $G$. $T$ is called the small torus, in contrary to the big torus $T_\aff$ of $\Gr_G$. The theory of $h_{T_\aff}(\Gr_G)$ when $h$ is the equivariant cohomology or the K-theory, is studied by Kostant and Kumar in \cite{KK86, KK90}. It is dual to the so-called affine nil-Hecke algebra (equivariant cohomology case) or the affine 0-Hecke algebra. Alternatively, the affine nil-Hecke algebra and the affine 0-Hecke algebra can be called the equivariant  homology and the equivariant K-homology theory.

The small torus equivariant  homology theory  $H_T(\Gr_G)$ of the affine Grassmannian was first studied by Peterson \cite{P97}. Moreover, he raised a conjecture (without a proof) saying that $H_T(\Gr_G)$ is isomorphic to the quantum cohomology $QH_T(G/B)$of $G/B$. This conjecture, together with its partial flag variety version, is proved by Lam-Shimozono in \cite{LS10}. One key step is the identification of $H_T(G/B)$ with the centralizer of $H_T(\pt)$ in $H_T(G_\aff/B_\aff)$ where $G_\aff$ is the Kac-Moody group associated to the affine root system and $B_\aff$ is its Borel subgroup. 

For K-theory, similar property was expected to hold. In \cite{LSS10}, the authors study the K-theoretic Peterson subalgebra, i.e., the centralizer of the ring $K_T(\pt)$ in the small-torus affine 0-Hecke algebra, i.e., the equivariant K-homology $K_T(G_\aff/B_\aff)$. It is proved that this algebra is isomorphic to $K_T(\Gr_G)$. One of the main tools is the small-torus GKM condition of the $T$-equivariant K-cohomology.  In \cite{LLMS18}, some evidence was provided in supporting the K-theory Peterson Conjecture. In \cite{K18}, using the study of semi-infinite flag variety, Kato proves this conjecture. More precisely, he embeds quantum K-theory of flag variety and certain localization of the Peterson subalgebra into $T$-equivariant K-theory of semi-infinite flag variety, and proves that their image coincide. 

In all the work mentioned above, the Peterson subalgebra plays key roles. In this paper, we generalize the construction of the Peterson subalgebra into general oriented cohomology theory $h$. Associated to such theory, there is a formal group law $F$ over the coefficient ring $R=h(\pt)$. Associated to $F$ and a Kac-Moody root system, in \cite{CZZ1, CZZ2, CZZ3, CZZ4}, the author generalized Kostant-Kumar's construction and defined the formal affine Demazure algebra (FADA). It is a non-commutative algebra generated by the divided difference operators. Its dual give an algebraic model for $h_{T_\aff}(G_\aff/B_\aff)$. Since Levine-Morel's oriented cohomology theory is only defined for smooth projective varieties, in this paper we do not intend to generalize the geometric theory. Instead, we only work with the algebraic model, i.e., the FADA associated to $h$. 

Following the same idea as the work mentioned above in cohomology and K-theory,  we look at the small-torus (the torus $T$) version, which is very similar as the big torus case $T_\aff$. We define the small torus FADA, $\bfD_{W_\aff}$.   In this paper, our first main result (Theorem \ref{thm:GKM}) shows that  the algebraic models for $h_T(G_\aff/B_\aff)$ and $h_T(\Gr_G)$, i.e.,  $\bfD^*_{W_\aff}$ and $(\bfD^*_{W_\aff})^W$, satisfy the small torus GKM condition. Based on that, we prove the second main result (Theorem \ref{thm:Peterson}), which shows that the dual of $h_T(\Gr_G)$, denoted by $\bfD_{Q^\vee}$ ($Q^\vee$ being the coroot lattice), coincides with the centralizer of $h_T(\pt)$ in the FADA $\bfD_{W_\aff}$. This defines the Peterson subalgebra associated to $h$.  

Our result generalizes and extends properties for equivariant cohomology and K-theory. Moreover, our method is uniform and does not reply on the specific oriented cohomology theory.  As an application of this construction, we define actions of the FADA (of the  big and small torus) on the algebraic models fo $h_{T_\aff}(\Gr_G)$ and $h_T(\Gr_G)$. This is called the left Hecke action. For finite flag varieties case it is studied in \cite{MNS22} using geometric arguments (see also \cite{B97,K03,T09, LZZ20}). For connective K-theory (which specializes to cohomology and K-theory), we compute the recursive formulas for certain basis in $h_T(\Gr_G)$ (Theorem \ref{thm:con}). 

It is natural to consider generalizing Kato's construction to this case, that is, invert  Schubert classes in $\bfD_{Q^\vee}$ corresponding to $t_\la\in Q^\vee_{<}$. This localization for K-theory was proved to be isomorphic  to $QK_T(G/B)$. For $h$ beyong singular cohomology and K-theory, however, the first obstruction is that there is no `quantum' oriented cohomology theory defined. The other obstruction is that  the divided difference operators do not satisfy braid relations. This was a key step in Kato's construction (see \cite[Theorem 1.7]{K18}). The author plans to investigate this in a future paper. 

This paper is organized as follows: In \S1 we recall the construction of the FADA for the big torus $T_\aff$, and in \S2 we compute the recursive formulas via the left Hecke action. In \S3 we we repeat the construction for the small torus and indicates the difference from the big torus case. In \S4 we prove that dual of the small torus FADA satisfies the small torus GKM condition, and in \S5 we define the Peterson subalgebra and  show that it coincides with the centralizer of $h_T(\pt)$.  In the appendix we provide some computational result in the $\hat A_1$ case.

\subsection*{Notations} 
Let $G\supset B\supset T$ be such that $G$ is simple, simply connected algebraic group over $\bbC$ with a Borel subgroup $B$ and a torus $T$. Let $G_\aff\supset B_\aff\supset T_\aff $ where $G_\aff$ is the affine Kac-Moody group with Borel subgroup $B_\aff$ and the affine torus $T_\aff$. Let $P$ be the maximal parabolic group scheme  so that $G_\aff/P=\Gr_G$ is the affine Grassmannian. Let $T^*$ (resp. $T_\aff^*$) be the group of characters of $T$ (resp. $T_\aff$), then $T^*_\aff=T^*\oplus\bbZ\de$. 

 Let $W$ be the Weyl group of $G$,  $I=\{\al_1,...,\al_n\}$ be the simple roots, $Q=\oplus _{i}\bbZ \al_i\subset T^*$ be the root lattice, $Q^\vee=\oplus_i \bbZ\al_i^\vee$ be the coroot lattice,  $\theta$ be the longest element, $\delta$ is the null root, $\al_0=-\theta+\de$ be the extra simple root. Denote  $I_\aff=\{\al_0,...,\al_n\}$.  For each $\la\in Q^\vee$, let $t_\la$ be the translation acting on $Q$. We then have $t_{\la_1} t_{\la_2}=t_{\la_1+\la_2}$, and $wt_\la w^{-1}=t_{w(\la)}, w\in W$. Let
$Q^\vee_{\leq}$ be the set of antidominant coroots, $Q^\vee_{<}$ be the set of strictly antidominant coroots (i.e., $(\la, \al_i)<0~  \forall i\in I$). 
Let  $W_\aff=W\ltimes Q^\vee$ be the affine Weyl group,  $\ell$ be the length function on $W_\aff$, and $w_0\in W$ be the longest element. 

Let  $\Phi$ be the set of roots for $W$,  $\Phi_\aff=\bbZ\de+\Phi$ be the set of real affine  roots, and $\Phi^\pm_\aff, \Phi^\pm$ be the corresponding set of positive/negative roots for the corresponding systems. Let $\inv(w)=w^{-1}\Phi_\aff^+\cap \Phi^-_\aff$. We have
\[
\Phi_\aff^+=\{\al+k\de|\al\in \Phi^+, k= 0 \text{ or }\al\in \Phi, k>0\}.
\]
 Let  $W_\aff^-$ be the minimal length representatives of $W_\aff/W$. There is a bijection 
\[
W_\aff^-\to Q^\vee, \quad  w\mapsto \la, \text{ if }wW=t_\la W.\]
Moreover, $W_\aff^-\cap Q^\vee=\{t_\la|\la\in Q_{\le}\}. $
The action of $\al+k\de$ on $\mu+m\de\in Q\oplus \bbZ\de$ is given by
\[
s_{\al+k\de}(\mu+m\de)=\mu+m\de-\langle\mu, \al^\vee\rangle(\al+k\de).
\]
In particular, for $\la\in Q^\vee, w\in W, \mu\in Q$, we have $s_{\al+k\de}=s_\al t_{k\al^\vee}, ~wt_\la(\mu)=w(\mu). $

We say the set of reduced sequences $I_w, w\in W_\aff$ is $W$-compatible if $I_w=I_{u}\cup I_v$ for $w=uv, u\in W_\aff^-, v\in W$. 
\section{FADA for the big torus}

In this section, we recall the construction of the formal affine Demazure algebra (FADA) for the affine root system. All the construction can be found in \cite{CZZ4}.

\subsection{} Let $F$ be a one dimensional formal group law over  a  domain $R$ with characteristic 0. 
Following from \cite{LM07} that there is an oriented cohomology $h$ whose associated formal group law is $F$. In this paper we won't need any geometric property of this $h$, since our treatment is pure algebraic and self-contained.
\begin{example}
Let $F=F_\fc=x+y-\fc xy$ be the connective formal group law (for connective K-theory) over $R=\bbZ[\fc]$. Specializing to $\fc=0$ or $\fc=1$, one obtains the additive or multiplicative formal group law. One of the simplest formal group laws beyond $F_\fc$ is the  hyperbolic formal group law considered in \cite{LZZ20}:
\[
F(x,y)=\frac{x+y-\fc xy}{1+a xy}, ~R=\bbZ[\fc, a].
\]
\end{example}

Let $\hatS$ be the formal group algebra of $T_\aff^*$ defined in \cite{CPZ13}.  That is, 
\[
\hatS=R[[x_\mu|\mu\in T_\aff^*]]/\calJ_F,  
\]
where $\calJ_F$ is the closure of the ideal generated $x_0$ and $x_{\mu_1+\mu_2}-F({x_{\mu_1}, x_{\mu_2}}), \mu_1, \mu_2\in T^*_\aff$. Indeed, after fixing a basis of $T^*_\aff\cong \bbZ^{n+1}$,  $\hatS$ is isomorphic to the power series ring $R[[x_1,...,x_{n+1}]]$. 
\begin{remark} \label{rm:poly} If $F=F_\mathfrak{c}$ is the connective formal group law, one can just replace $\hatS$ by $R[x_\mu|\mu\in T^*_\aff]/\calJ_F$. In other words, in this case one can use the polynomial ring instead of the power series ring. For instance, if $\fc=0$, then $\hatS\cong\Sym_R(T^*_\aff), x_\mu\mapsto \mu$. If $\fc\in R^\times$, then $\hatS\cong R[T^*_\aff], x_\mu\mapsto \fc^{-1}(1-e^{-\mu})$. Throughout this paper, whenever we specializes to $F_\fc$, we assume that $\hatS$ is the polynomial version. 
\end{remark}

\subsection{}
 Define $\hat \cQ=\hatS [\frac{1}{x_{\al}}, \al\in \Phi_\aff]. $
The Weyl groups $W_\aff$ acts on $\hatQ$, so we can define the twisted group algebra
$
\hatQ_{W_\aff}:=\hatQ\rtimes R[W_\aff],
$, which is a free left $\hatQ$-module with basis denoted by $\eta_w, w\in W_\aff$ and the product $c\eta_wc'\eta_{w'}=cw(c')\eta_{ww'},~ c,c'\in \hatQ$.

For each $\al\in \Phi_\aff$, define $\kappa_{\al}=\frac{1}{x_{\al}}+\frac{1}{x_{-\al}}\in \hatS$. If $F=F_\fc$, then $\kappa_\al=\fc$. For each simple root $\al_i$, we  define the Demazure element $\hatX_{\al_i}=\frac{1}{x_{\al_i}}(1-\eta_{s_i})$.  It is easy to check that $\hatX_{\al}^2=\kappa_\al \hatX_\al.$
For simplicity, denote $\eta_i=\eta_{\al_i}=\eta_{s_i}$, $x_{\pm i}=x_{\pm \al_i}$,  $\hatX_i=\hatX_{\al_i}$, $i\in I_\aff$. 
If $I_w=(i_1,...,i_k), i_j\in I_\aff$ is a reduced sequence of $w\in W_\aff$, we define $\hatX_{I_w}$ correspondingly. It is well known that they depends on the choice of $I_w$, unless $F=F_\fc$. 

Write 
\begin{equation}\label{eq:basis}\hatX_{I_w}=\sum_{v\le w}\hat a_{I_w,v}\eta_v, \quad \eta_w=\sum_{v\le w}\hat b_{w,I_v}\hat X_{I_v}, \quad \hat a_{I_w,v}\in \hatQ,~ \hat b_{w,I_v}\in \hatS,
\end{equation}
 then we have $\hat b_{w,I_w}=\prod_{\al\in \inv(w)}x_\al=\frac{1}{\hat a_{I_w, w}}$.

Let $\hatD_{W_\aff}$ be the subalgebra of $\hatQ_{W_\aff}$ generated by $\hatS$ and $\hatX_i, i\in I_\aff$. This is called the formal affine Demazure algebra (FADA) for the big torus.  It is easy to see that $\hatX_{I_w}, w\in W_\aff$ is a $\hatQ$-basis of $\hatQ_{W_\aff}$,  and it is proved in \cite{CZZ4} that  it is also a basis of the  left $\hatS$-module $\hatD_{W_\aff}$. Note that $W\subset \hatD_{W_\aff}$ via the map $s_i\mapsto \eta_{i}=1-x_i\hatX_i\in \hatD_{W_\aff}$.

\begin{remark}It is not difficult to derive that there is a residue description of the coefficients in the expression of  elements of $\hatD_{W_\aff}$ as linear combinations of $\eta_w$. Such description was first given in \cite{GKV97}. See \cite{ZZ17} for more details. 
\end{remark}

\subsection{}
We define the duals of left modules:
\[
\hatQ_{W_\aff}^*=\Hom_\hatQ(\hatQ_{W_\aff}, \hatQ)=\Hom (W_\aff, \hatQ), \quad \hatD_{W_\aff}^*=\Hom_\hatS(\hatD_{W_\aff}, \hatS).
\]
Dual to the elements $\eta_w, \hatX_{I_w}\in \hatD_{W_\aff}\subset \hatQ_{W_\aff}$, we have $\hat f_w, \hatX_{I_w}^*\in \hatD_{W_\aff}^*\subset \hatQ_{W_\aff}^*$. The product structure on $\hatQ_{W_\aff}^*$ is defined by $\hat f_w\hat f_v=\de_{w,v}\hat f_w,$ with the unit  given by $\unit=\prod_{w\in W_\aff}\hat f_w$. Note that here we usually use $\prod$ to denote a sum of  (possibly) infinitely many terms, and $\sum$ to denote a finite sum. 
\begin{lemma}\label{lem:bigtorus}We have 
\[
\hatD_{W_\aff}^*=\{\hat f\in \hatQ_{W_\aff}^*|\hat f(\hatD_{W_\aff})\subset \hatS \}.
\]
\end{lemma}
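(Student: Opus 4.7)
The plan is to exploit the fact, recalled just before the lemma, that the family $\{\hatX_{I_w} : w\in W_\aff\}$ is simultaneously an $\hatS$-basis of $\hatD_{W_\aff}$ and a $\hatQ$-basis of $\hatQ_{W_\aff}$. This means that the natural inclusion $\hatD_{W_\aff}\hookrightarrow\hatQ_{W_\aff}$ realizes $\hatQ_{W_\aff}$ as $\hatQ\otimes_\hatS\hatD_{W_\aff}$, so every $\hatS$-linear functional on $\hatD_{W_\aff}$ extends uniquely to a $\hatQ$-linear functional on $\hatQ_{W_\aff}$, and conversely every $\hatQ$-linear functional on $\hatQ_{W_\aff}$ restricts to an $\hatS$-linear functional on $\hatD_{W_\aff}$. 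The lemma is then the observation that under this restriction/extension correspondence, the subset of $\hatD_{W_\aff}^*$ that sits inside $\hatQ_{W_\aff}^*$ is carved out by the $\hatS$-valuedness condition.

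More concretely, I would first prove the easy inclusion $\supset$: given $\hat f\in\hatQ_{W_\aff}^*$ with $\hat f(\hatD_{W_\aff})\subset\hatS$, set $g:=\hat f|_{\hatD_{W_\aff}}$. Since $\hat f$ is $\hatQ$-linear and $\hatS\subset\hatQ$, the restriction $g$ is automatically $\hatS$-linear, and by hypothesis it takes values in $\hatS$, so $g\in\hatD_{W_\aff}^*$. Moreover, because $\hat f$ is determined by its values on the $\hatQ$-basis $\{\hatX_{I_w}\}$ and these values equal $g(\hatX_{I_w})$, the functional $\hat f$ is recovered from $g$ under the inclusion $\hatD_{W_\aff}^*\hookrightarrow\hatQ_{W_\aff}^*$.

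For the inclusion $\subset$, given $g\in\hatD_{W_\aff}^*$, define
\[
\hat f\Bigl(\sum_{w\in W_\aff} c_w\hatX_{I_w}\Bigr):=\sum_{w\in W_\aff}c_w\,g(\hatX_{I_w}),\qquad c_w\in\hatQ,
\]
where the sums are finite. This is well defined and $\hatQ$-linear because $\{\hatX_{I_w}\}$ is a $\hatQ$-basis of $\hatQ_{W_\aff}$, so $\hat f\in\hatQ_{W_\aff}^*$. On elements of $\hatD_{W_\aff}$ the coefficients $c_w$ lie in $\hatS$ and $g(\hatX_{I_w})\in\hatS$, hence $\hat f(\hatD_{W_\aff})\subset\hatS$, giving the desired inclusion.

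The only substantive input is the common-basis statement \eqref{eq:basis}, which guarantees that the extension in the second step is well defined and unique; once that is in hand the proof is purely formal. I do not expect a genuine obstacle here, but the one point to verify cleanly is that the identification $\hatD_{W_\aff}^*\hookrightarrow\hatQ_{W_\aff}^*$ implicit in the statement is exactly this restriction/extension correspondence, and not some different embedding, so that the displayed set-theoretic equality makes sense.
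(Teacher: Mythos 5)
Your proof is correct, and it rests on the same essential input as the paper's --- that $\{\hatX_{I_w}\}_{w\in W_\aff}$ is simultaneously an $\hatS$-basis of $\hatD_{W_\aff}$ and a $\hatQ$-basis of $\hatQ_{W_\aff}$ --- but it packages the nontrivial inclusion differently. The paper works in the $\hat f_w$-coordinates: writing $\hat f=\prod_{\ell(w)\ge k}c_w\hat f_w$, it uses the triangularity of $\hatX_{I_u}=\sum_{v\le u}\hat a_{I_u,v}\eta_v$ with invertible leading coefficient $\hat a_{I_u,u}=1/\hat b_{u,I_u}$ to get $\hat f(\hatX_{I_u})=c_u\hat a_{I_u,u}\in\hatS$, subtracts $\sum_{\ell(u)=k}c_u\hat a_{I_u,u}\hatX_{I_u}^*$, and inducts on length. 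You replace that induction by the observation that restriction and $\hatQ$-linear extension along $\hatQ_{W_\aff}\cong\hatQ\otimes_{\hatS}\hatD_{W_\aff}$ are mutually inverse, so the set cut out by $\hat f(\hatD_{W_\aff})\subset\hatS$ is exactly the image of $\hatD_{W_\aff}^*$. The caveat you flag at the end is the right one and is easily discharged: the embedding $\hatD_{W_\aff}^*\subset\hatQ_{W_\aff}^*$ used in the paper sends the dual basis element $\hatX_{I_w}^*$ to $\prod_{v\ge w}\hat b_{v,I_w}\hat f_v$, which is precisely its $\hatQ$-linear extension, so the identification is indeed the restriction/extension correspondence. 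Your route buys a shorter, coordinate-free argument with no induction; the paper's version has the minor advantage of explicitly exhibiting the $\hatS$-coefficients of $\hat f$ with respect to the $\hatX_{I_w}^*$, a computation reused elsewhere (e.g.\ in the small-torus analogue).
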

\begin{proof} Denote the RHS by $\calZ_1$. 
It is clear that $\hatD_{W_\aff}^*$ is contained in $\calZ_1$ since $\hatX_{I_v}$ generate $\hatD_{W_\aff}$,   $\hatX_{I_w}^*$ generate $\hatD_{W_\aff}^*$, and  $\hatX_{I_w}^*(\hatX_{I_v})=\de_{w,v}$.  Conversely, let $\hat f=\prod_{\ell(w)\ge k}c_w\hat f_w\in \calZ_1$. If $\ell(u)=k$, then from \eqref{eq:basis}, we have
\[
\hat f(\hatX_{I_u})=\prod_{\ell(w)\ge k}c_w\hat f_w(\sum_{v\le u}\hat a_{I_u,v}\eta_v)=c_u\hat a_{I_u,u}\in \hatS.
\]
Denote  $\hat f':=\hat f-\sum_{\ell(u)=k}c_u\hat a_{I_u,u}\hat X_{I_u}^*$. Note that $\hat X_{I_u}^*=\prod_{w\in W_\aff}\hat b_{w,I_u}f_w$ and $\hat b_{u,I_u}\hat a_{I_u,u}=1$, so  for any $u$ with $\ell(u)=k$, we have $\hat f'(\eta_u)=c_u-c_u\hat a_{I_u,u}\hat X_{I_u}^*(\eta_u)=c_u-c_u=0$,  so $\hat f'$ is a linear combination of $\hat f_w,\ell(w)\ge k+1$. Repeating this process, we get that  $\hat f \in \hatD_{W_\aff}^*$. 
\end{proof}
\subsection{}
There is an $\hatQ$-linear action of $\hatQ_{W_\aff}$ on $\hatQ_{W_\aff}^*$, defined by 
\[
(z\bullet \hat f)(z')=\hat f(z'z), \quad z,z'\in \hatQ_{W_\aff}, \hat f\in \hatQ_{W_\aff}^*.
\]
This is called the right Hecke action. 
We have 
\[c\eta_w\bullet c'\hat f_{w'}=c'w'w^{-1}(c)\hat f_{w'w^{-1}}, ~c, c'
\in \hatQ.\]
 It follows from Lemma \ref{lem:bigtorus} and similar reason as in  \cite[\S10]{CZZ2} that this induces  an action of $\hatD_{W_\aff}$ on $\hatD_{W_\aff}^*$. Moreover, it induces an action of $W\subset \hatD_{W_\aff}$ on $\hatQ_{W_\aff}^*$ and $\hatD_{W_\aff}^*$. By definition it is easy to get
\begin{eqnarray}\label{eq:bullet}
\hatX_\al\bullet \prod_{w\in W_\aff}c_w\hat f_w=\prod_{w\in W_\aff}\frac{c_w-c_{s_{w(\al)}w}}{x_{w(\al)}}\hat f_w.
\end{eqnarray}
 The following proposition is proved in the  finite case in \cite[Lemma 10.2, Theorem 10.7]{CZZ2}. 
\begin{prop}\label{prop:bigtorusGKM}
The subset $\hatD_{W_\aff}^*\subset\hatQ_{W_\aff}^*$ satisfies the following   (big-torus) GKM condition:
\begin{eqnarray*}
\hatD_{W_\aff}^*=\{\hat f\in \hatQ_{W_\aff}^*|\hat f (\eta_w)\in \hatS\text{ and }\hat f(\eta_w-\eta_{s_{\al}w})\in x_{\al} \hatS, \forall \al\in \Phi_\aff\}.
\end{eqnarray*}
\end{prop}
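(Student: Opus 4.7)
The plan is to prove the two containments separately, adapting the finite-type argument of \cite[Lemma 10.2, Theorem 10.7]{CZZ2} to the affine Weyl group. Throughout, let $\calZ$ denote the right-hand side of the claimed equality.

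For the direction $\hatD_{W_\aff}^*\subseteq \calZ$, I would invoke Lemma \ref{lem:bigtorus}. Since each $\eta_{s_i}=1-x_i\hatX_i$ lies in $\hatD_{W_\aff}$, closure under products gives $\eta_w\in \hatD_{W_\aff}$ for every $w\in W_\aff$, whence $\hat f(\eta_w)\in\hatS$. For any real affine root $\al=w'(\al_i)$, the identity $\hatX_\al=\eta_{w'}\hatX_{\al_i}\eta_{w'^{-1}}$ in $\hatQ_{W_\aff}$ shows $\hatX_\al\in\hatD_{W_\aff}$, so
\[
\eta_w-\eta_{s_\al w}=(1-\eta_{s_\al})\eta_w=x_\al\,\hatX_\al\eta_w \in x_\al\hatD_{W_\aff},
\]
and applying $\hat f$ yields $\hat f(\eta_w-\eta_{s_\al w})\in x_\al\hatS$.

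For the reverse inclusion, I would produce, for each $\hat f\in\calZ$, coefficients $s_w\in\hatS$ so that $\hat f=\sum_{w\in W_\aff} s_w\hatX_{I_w}^*$ pointwise; the sum converges since $\hatX_{I_w}^*(\eta_v)=0$ unless $w\le v$. These $s_w$ are defined by induction on Bruhat length $k$: at stage $k$, having already subtracted combinations from earlier stages so that $\hat f(\eta_v)=0$ for $\ell(v)<k$ (while keeping $\hat f\in\calZ$, since each $\hatX_{I_u}^*\in \hatD_{W_\aff}^*\subseteq\calZ$ by the easy direction), fix $w$ with $\ell(w)=k$. For every $\beta\in\Phi_\aff^+$ with $s_\beta w<w$ in Bruhat order, the inductive hypothesis gives $\hat f(\eta_{s_\beta w})=0$, and the GKM condition forces $\hat f(\eta_w)\in x_\beta\hatS$. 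Such $\beta$'s correspond bijectively via $\beta\mapsto w^{-1}(\beta)$ to the $\ell(w)$ elements of $\inv(w)$, giving $\ell(w)$ pairwise distinct divisibilities. Using pairwise coprimality of the $x_\beta$ in the formal group algebra $\hatS$, these combine into $\hat f(\eta_w)\in \hat b_{w,I_w}\hatS$, where $\hat b_{w,I_w}=\prod_{\al\in\inv(w)}x_\al$ as recorded after \eqref{eq:basis}, so $s_w:=\hat f(\eta_w)/\hat b_{w,I_w}\in\hatS$. Subtracting $\sum_{\ell(w)=k}s_w\hatX_{I_w}^*$ then yields a new element of $\calZ$ vanishing on $\eta_v$ for all $\ell(v)\le k$, and the induction continues.

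The main obstacle is the coprimality step: for distinct real affine roots $\beta_1,\ldots,\beta_r$, one needs $\bigcap_i x_{\beta_i}\hatS=(\prod_i x_{\beta_i})\hatS$ in $\hatS$. In finite type this is standard from the regularity of $\hatS$ established in \cite{CPZ13}, and the affine version requires the analogous regularity analysis applied to the enlarged lattice $T_\aff^*=T^*\oplus \bbZ\delta$; distinct real affine roots have linearly independent images in $\fm/\fm^2$, which should yield the required pairwise coprimality. This is the point where most care is needed in passing from the finite to the affine setting.
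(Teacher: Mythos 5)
Your proposal is correct in substance, but it takes a genuinely different route from the paper's, most notably in the reverse inclusion. For $\hatD_{W_\aff}^*\subseteq\calZ$ the paper applies the $\bullet$-action ($\hatX_\al\bullet\hat f\in\hatD_{W_\aff}^*$ plus the formula \eqref{eq:bullet}), while you observe $\hatX_\al\eta_w\in\hatD_{W_\aff}$ and invoke Lemma \ref{lem:bigtorus}; both work. For $\calZ\subseteq\hatD_{W_\aff}^*$ the paper avoids all divisibility-by-products arguments: it shows $\hatD_{W_\aff}^*$ is the maximal $\hatD_{W_\aff}$-submodule of $\Hom(W_\aff,\hatS)$ under $\bullet$ (since $\hat f(\hatX_{I_w})=(\hatX_{I_w}\bullet\hat f)(\eta_e)$ must lie in $\hatS$) and that $\calZ$ is such a submodule, quoting the finite-type computation of \cite{CZZ2}; maximality then finishes the proof, and only pairwise root comparisons ever enter. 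Your argument is instead the classical Kostant--Kumar/GKM elimination against the basis $\hatX_{I_w}^*$, which is more explicit --- it actually produces the coefficients $s_w=\hat f(\eta_w)/\hat b_{w,I_w}$ --- at the cost of the coprimality input you flag. That input is not really a gap: pairwise coprimality of $x_\al,x_\be$ for non-proportional real affine roots is exactly \cite[Lemma 2.2]{CZZ4}, which this paper already uses in proving that the $\odot$-action preserves $\hatD_{W_\aff}^*$, and the several-factor statement ($\bigcap_i x_{\beta_i}\hatS=(\prod_i x_{\beta_i})\hatS$) follows from the pairwise one by induction because $\hatS$ is a domain, so each $x_{\beta_i}$ is regular modulo $x_{\beta_j}$; your appeal to linear independence in $\fm/\fm^2$ alone would not suffice over a general coefficient ring, so cite the lemma instead. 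One point to state carefully: the divisibilities you obtain are by $x_\beta$ for $\beta\in\Phi_\aff^+$ with $w^{-1}(\beta)\in\Phi_\aff^-$, i.e.\ the inversion set of $w^{-1}$, and what your subtraction step needs is that $\hat b_{w,I_w}=1/\hat a_{I_w,w}$ generates the same ideal as the product of these $x_\beta$; this is immediate from expanding $\hatX_{I_w}$ along a reduced word, whose $\eta_w$-coefficient is $\pm\prod_\beta x_\beta^{-1}$ with $\beta$ running over $\{\al_{i_1},s_{i_1}\al_{i_2},\dots\}=\Phi_\aff^+\cap w\Phi_\aff^-$, so your conclusion $s_w\in\hatS$ is correct once the product is taken over that set (the identification with $\inv(w)$ via $\beta\mapsto w^{-1}(\beta)$ should only be used up to this relabelling and up to units). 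With these references supplied, your induction on length, the finiteness of each length stratum of $W_\aff$, and the triangularity $\hatX_{I_w}^*(\eta_v)=0$ unless $w\le v$ make the argument complete; what the paper's route buys is brevity and independence from any regularity analysis of products of $x_\beta$, while yours gives an effective reconstruction of $\hat f$ from its restrictions.
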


\begin{proof}
Denote the RHS by $\calZ_2$.  Let $\hat f \in \hatD_{W_\aff}^*$, we know  $\hat X_\al\bullet\hat f\in \hatD_{W_\aff}^*$. Then \eqref{eq:bullet} implies that $\hat f$ satisfies the condition defining $\calZ_2$, so $\hatD_{W_\aff}^*\subset \calZ_2$. 

For the other direction, we first show that $\hatD^*_{W_\aff}$ is a maximal $\hatD_{W_\aff}$-submodule of $\hatS_{W_\aff}^*:=\Hom(W_\aff, \hatS)$. This can be proved as follows: if $M\subset \hatS_{W_\aff}^*$ is a $\hatD_{W_\aff}$-module, for any $\hat f\in M$, we have $\hatX_I\bullet\hat  f\in M\subset \hatS_{W_\aff}^*$, so $\hat X_I\bullet \hat f(\eta_e)=\hat f(\hat X_I)\in \hatS$, so $f\in \hatD_{W_\aff}^*$. 
One then can   show that the subset $\calZ_2$ is a $\hatD_{W_\aff}$-module, which follows from the same proof as in the finite case in \cite[Theorem 10.2]{CZZ2}. Since  $\hatD_{W_\aff}^*$ is a maximal submodule, we have $\calZ_2\subset \hatD_{W_\aff}^*$. The proof is finished.
\end{proof}

\subsection{}
We can similarly define the non-commutative ring $\hatQ_{Q^\vee}=\hatQ\rtimes R[Q^\vee]$ with a $\hatQ$-basis $\eta_{t_\la}, \la\in Q^\vee$. Then there is a canonical map of left $\hatQ$-modules:
\[
\pr:\hatQ_{W_\aff}\to \hatQ_{Q^\vee}, \quad c\eta_{t_\la w}\mapsto c\eta_{t_{\la}}, \quad w\in W, \la\in Q^\vee, c\in \hatQ.
\]
Define $\hatD_{W_\aff/W}=\pr(\hatD_{W_\aff})\subset \hatQ_{Q^\vee}$. Indeed, this is the same as the relative Demazure module defined in \cite[\S11]{CZZ2}.

We can also consider the $\hatQ$-dual $\hatQ_{Q^\vee}^*$ and the $\hatS$-dual $\hatD_{W_\aff/W}^*$. The elements dual to $\eta_{t_\la}\in \hatQ_{Q^\vee}$ are denoted by $\hat f_{t_\la}$. The projection $\pr$ then induces embeddings $\pr^*:\hatQ_{Q^\vee}^*\hookrightarrow \hatQ_{W_\aff}^*$ and $\pr^*:\hatD_{W_\aff/W}\hookrightarrow \hatD_{W_\aff}^*$. It is easy to see that 
\[\pr^*(\hat f_{t_\la})=\sum_{v\in W}\hat f_{t_\la v}.\]
Moreover, similar as in the finite case \cite[Lemma 11.7]{CZZ2}, we have 
\[
\pr^*(\hatQ_{Q^\vee}^*)=(\hatQ_{W_\aff}^*)^W, \quad \pr^*(\hatD_{W_\aff/W}^*)=(\hatD_{W_\aff}^*)^W.
\]
Indeed,  elements of $\pr^*(\hatQ_{Q^\vee}^*)=(\hatQ_{W_\aff}^*)^W$ are precisely the elements $\hat f\in \hatQ_{W_\aff}^*$ satisfying $\hat f(\eta_{t_\la w}-\eta_{t_\la })=0$ for any $\la\in Q^\vee, w\in W$. It follows from similar reason as \cite[Corollary 8.5, Lemma 11.5]{CZZ2} that if $I_w, w\in W_\aff$ is $W$-compatible, then  $\hat b_{{uv},I_w}=\hat b_{u,I_w}$ for any $v\in W$. We then have
\begin{lemma} \label{lem:bigWinv}Assume  the sequences $I_w, w\in W_\aff$ is $W$-compatible,  then $\pr(X_{I_w}), w\in W_\aff^-$ is a basis of $\hatD_{W_\aff/W}$, and 
$\{\hatX_{I_w}^*, w\in W_\aff^-\}$ is a $\hatQ$-basis of $(\hatQ_{W_\aff}^*)^W$ and a $\hatS$-basis of $(\hatD_{W_\aff}^*)^W$. 
\end{lemma}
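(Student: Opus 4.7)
The plan is to first establish the dual statement — that $\{\hatX_{I_w}^*\}_{w\in W_\aff^-}$ is a $\hatQ$-basis of $(\hatQ_{W_\aff}^*)^W$ and an $\hatS$-basis of $(\hatD_{W_\aff}^*)^W$ — and then recover the first assertion by a duality argument.

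The $W$-invariance of $\hatX_{I_w}^*$ for $w\in W_\aff^-$ will be immediate from the coefficient identity $\hat b_{uv,I_w}=\hat b_{u,I_w}$ recorded just above the lemma: writing $\hatX_{I_w}^*=\sum_y \hat b_{y,I_w}\hat f_y$, this identity says that $\hat b_{y,I_w}$ depends only on the coset $yW$, which, in view of the formula $v\bullet \hat f_y=\hat f_{yv^{-1}}$ for the right Hecke action, is equivalent to $v\bullet \hatX_{I_w}^*=\hatX_{I_w}^*$ for every $v\in W$. Linear independence is automatic since $\{\hatX_{I_w}^*\}_{w\in W_\aff}$ is a $\hatQ$-basis of $\hatQ_{W_\aff}^*$.

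For spanning, I use the isomorphism $\pr^*\colon \hatQ_{Q^\vee}^*\isom(\hatQ_{W_\aff}^*)^W$ which sends the topological $\hatQ$-basis $\{\hat f_{t_\la}\}_{\la\in Q^\vee}$ of $\hatQ_{Q^\vee}^*$ to $\{\sum_{v\in W}\hat f_{t_\la v}\}_{\la\in Q^\vee}$ in $(\hatQ_{W_\aff}^*)^W$. Transporting the indexing via the bijection $W_\aff^-\to Q^\vee$, $u\mapsto\la(u)$, the change-of-basis matrix from $\{\hatX_{I_u}^*\}_{u\in W_\aff^-}$ to $\{\pr^*(\hat f_{t_{\la(u)}})\}_{u\in W_\aff^-}$ is upper triangular in the Bruhat/length order on $W_\aff^-$ with diagonal entries $\hat b_{u,I_u}=\prod_{\al\in\inv(u)}x_\al$, which are units in $\hatQ$. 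Inverting this matrix level by level in the length filtration produces the required expansion of any $W$-invariant functional in terms of the $\hatX_{I_u}^*$. The same argument verbatim, noting $\hat b_{y,I_u}\in\hatS$ and that $\{\hatX_{I_w}^*\}_{w\in W_\aff}$ is an $\hatS$-basis of $\hatD_{W_\aff}^*$, yields the $\hatS$-basis of $(\hatD_{W_\aff}^*)^W$.

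The first assertion then follows formally by duality. The isomorphism $\pr^*\colon \hatD_{W_\aff/W}^*\isom(\hatD_{W_\aff}^*)^W$ pulls back the $\hatS$-basis $\{\hatX_{I_u}^*\}_{u\in W_\aff^-}$ to an $\hatS$-basis $\{g_u\}_{u\in W_\aff^-}$ of $\hatD_{W_\aff/W}^*$, and for $u,u'\in W_\aff^-$ one computes
\[
g_u(\pr(\hatX_{I_{u'}}))=\pr^*(g_u)(\hatX_{I_{u'}})=\hatX_{I_u}^*(\hatX_{I_{u'}})=\de_{u,u'},
\]
so $\{\pr(\hatX_{I_u})\}_{u\in W_\aff^-}$ is the dual $\hatS$-basis in $\hatD_{W_\aff/W}$. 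The main obstacle I expect is the triangular inversion step: since $W_\aff^-$ is infinite, the argument must be organised length-by-length so that the inductive determination of coefficients stays inside the topological completions defining $\hatQ$ and $\hatS$, and so that the chosen ordering on $W_\aff^-$ is genuinely triangular for the matrix in question.
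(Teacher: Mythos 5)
Your treatment of the second and third assertions is essentially the paper's intended route (the argument behind \cite[Lemma 11.7]{CZZ2}): $W$-invariance of $\hatX_{I_w}^*$, $w\in W_\aff^-$, from the coset-constancy $\hat b_{uv,I_w}=\hat b_{u,I_w}$ together with $\eta_v\bullet \hat f_y=\hat f_{yv^{-1}}$, and spanning of $(\hatQ_{W_\aff}^*)^W$ by a length-by-length triangular inversion against the coset functionals $\pr^*(\hat f_{t_\la})$, with diagonal entries $\hat b_{u,I_u}=\prod_{\al\in \inv(u)}x_\al$ invertible in $\hatQ$; this part is fine, including your care about organizing the inversion by length. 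However, the phrase ``the same argument verbatim'' for the $\hatS$-statement is not right as stated: the diagonal entries $\hat b_{u,I_u}$ are \emph{not} units in $\hatS$, so you cannot invert the triangular matrix over $\hatS$. The correct fix (which your parenthetical remarks gesture at but do not carry out) is to read off the coefficients by evaluation, as in the proof of Lemma \ref{lem:bigtorus}: for $\hat f\in(\hatD_{W_\aff}^*)^W$ the coefficient of $\hatX_{I_u}^*$ is $\hat f(\hatX_{I_u})\in\hatS$, and the $\hatQ$-expansion you already obtained, together with uniqueness of coefficients, forces the expansion to be supported on $W_\aff^-$ with these $\hatS$-coefficients.

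The genuine gap is in the first assertion. From $g_u\bigl(\pr(\hatX_{I_{u'}})\bigr)=\de_{u,u'}$ you conclude that $\{\pr(\hatX_{I_u})\}_{u\in W_\aff^-}$ ``is the dual $\hatS$-basis in $\hatD_{W_\aff/W}$,'' but a family pairing as Kronecker delta against a basis of $\hatD_{W_\aff/W}^*$ only gives linear independence; it does not show the family generates $\hatD_{W_\aff/W}=\pr(\hatD_{W_\aff})$ (to get generation from duality you would additionally need the $g_u$ to separate points of $\hatD_{W_\aff/W}$ plus a finiteness/convergence argument, which is not available without knowing the module structure you are trying to establish). What is missing is the spanning input, namely the big-torus analogue of Lemma \ref{lem:przero}: for $i\in I$ and $z=p\eta_w$ one has $z\hatX_i=\frac{p}{w(x_{\al_i})}(\eta_w-\eta_{ws_i})$ with $w$ and $ws_i$ in the same coset of $W_\aff/W$, hence $\pr(z\hatX_i)=0$. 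With $W$-compatible sequences this gives $\pr(\hatX_{I_{uv}})=\pr(\hatX_{I_u}\hatX_{I_v})=0$ for $u\in W_\aff^-$, $v\in W\setminus\{e\}$, so every $\pr(z)$, $z\in\hatD_{W_\aff}$, is an $\hatS$-combination of the $\pr(\hatX_{I_u})$, $u\in W_\aff^-$; combined with the independence coming from your pairing (or from triangularity in the $\eta$-basis), this yields the basis claim. This is exactly the mechanism the paper relies on (via \cite[Lemma 11.3]{CZZ2} in the small-torus analogue), and your write-up never invokes it.
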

 Note that $(\hatD_{W_\aff}^*)^W$ is the algebraic model for $h_{T_\aff}(\Gr_G)$ and the embedding $(\hatD_{W_\aff}^*)^W\subset \hatD_{W_\aff}^*$ is the algebraic model for the pull-back $h_{T_\aff}(\Gr_G)\to h_{T_\aff}(G_\aff/B_\aff)$.

\subsection{}
Similar as the finite case in \cite[\S3]{LZZ20}, there is another action of $\hatQ_{W_\aff}$  on $\hatQ_{W_\aff}^*$ by 
\[
a\eta_v\odot b\hat f_w=av(b)\hat f_{vw}, ~a,b\in \hatQ, w,v\in W_\aff.
\]
This is called the left Hecke action. It is easy to see that it commutes with the $\bullet$-action.  Note however that the $\odot$-action is not $\hatQ$-linear. 
\begin{lemma}The $\odot $ action of $\hatQ_{W_\aff}$ on $\hatQ_{W_\aff}^*$ induces an action of $\hatD_{W_\aff}$ on $\hatD_{W_\aff}^*$.
\end{lemma}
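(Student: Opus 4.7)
My plan is to reduce the claim to checking $\odot$-stability of $\hatD_{W_\aff}^*$ on algebra generators of $\hatD_{W_\aff}$, and then verify the generator case using the GKM criterion (Proposition \ref{prop:bigtorusGKM}).

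First, a direct check on the formula $a\eta_v \odot b\hat f_w = av(b)\hat f_{vw}$ shows that $\odot$ is a genuine left action: $(zz')\odot \hat f = z \odot (z' \odot \hat f)$. Consequently, the set
$$A := \{z \in \hatD_{W_\aff} \mid z \odot \hat f \in \hatD_{W_\aff}^* \text{ for every } \hat f \in \hatD_{W_\aff}^*\}$$
is an $R$-subalgebra of $\hatD_{W_\aff}$. Since $\hatD_{W_\aff}$ is generated as an $R$-algebra by $\hatS$ together with the Demazure elements $\{\hatX_i\}_{i \in I_\aff}$, it will suffice to show both generating families lie in $A$. The scalar case is immediate: for $s \in \hatS$, the formula gives $s \odot \hat f = s\cdot \hat f$ (pointwise scalar multiplication), which lies in $\hatD_{W_\aff}^*$ because the latter is an $\hatS$-module.

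The substantive case is $z = \hatX_i$. Writing $\hat f = \prod_u c_u \hat f_u$ and expanding $\hatX_i = \tfrac{1}{x_i}(1 - \eta_{s_i})$, one obtains $(\hatX_i \odot \hat f)(\eta_u) = (c_u - s_i(c_{s_iu}))/x_i$. For GKM condition (1), the decomposition $c_u - s_i(c_{s_iu}) = (c_u - c_{s_iu}) + (c_{s_iu} - s_i(c_{s_iu}))$ puts the first summand in $x_i\hatS$ by condition (2) for $\hat f$, and the second in $x_i\hatS$ by the standard fact that $g - s_i(g) \in x_{\al_i}\hatS$ for every $g \in \hatS$. For condition (2) at $\al \in \Phi_\aff$, the identity $s_is_\al = s_{s_i(\al)}s_i$ yields $c_{s_iu} - c_{s_is_\al u} \in x_{s_i(\al)}\hatS$, so its $s_i$-image lies in $x_\al\hatS$; together with $c_u - c_{s_\al u} \in x_\al\hatS$ this shows that the numerator
$$N := (c_u - c_{s_\al u}) - s_i(c_{s_iu} - c_{s_is_\al u})$$
lies in $x_\al\hatS \cap x_i\hatS$.

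For $\al \ne \pm\al_i$, linear independence of $\al,\al_i$ makes $x_i, x_\al$ a regular sequence in $\hatS$ (standard for formal group algebras), so $N \in x_ix_\al\hatS$ and $N/x_i \in x_\al\hatS$. The main obstacle will be the degenerate case $\al = \pm\al_i$, where the regular-sequence argument fails. I plan to handle it by exploiting $F(x_i,x_{-i})=0$, which yields the relation $x_{-i}/x_i \equiv -1 \pmod{x_i\hatS}$: writing $c_u - c_{s_iu} = x_i h_0$ with $h_0 \in \hatS$ and computing directly, one finds $N/x_i = h_0 - s_i(h_0) + x_i \ep \, s_i(h_0) \in x_i\hatS = x_{\al_i}\hatS$ (invoking once more that $h_0 - s_i(h_0) \in x_i\hatS$). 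This completes the GKM verification for $\hatX_i \odot \hat f$ and therefore establishes the lemma.
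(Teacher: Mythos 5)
Your proposal is correct and takes essentially the same route as the paper's proof: compute the coefficients of $\hatX_i\odot\hat f$ and verify the big-torus GKM condition of Proposition \ref{prop:bigtorusGKM}, using the decomposition of $c_u-s_i(c_{s_iu})$ into differences, the identity $s_is_\al=s_{s_i(\al)}s_i$, coprimality of $x_i,x_\al$ when $\al\neq\pm\al_i$, and the expansion $x_{-i}=-x_i+x_i^2q$ in the degenerate case. Your explicit reduction to the algebra generators $\hatS$ and $\hatX_i$, the check that the coefficients lie in $\hatS$, and the grouping of $\al=-\al_i$ together with $\al=\al_i$ (where the coprimality argument does not apply) only spell out steps the paper leaves implicit.
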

\begin{proof}
We have 
\[\hat X_{\al}\odot \prod_wc_w\hat f_w=\frac{1}{x_\al}(1-\eta_\al)\odot \prod_wc_w\hat f_w=\prod_{w}\frac{c_w-s_\al(c_{s_\al w})}{x_\al}\hat f_w.\]
let $d_{w,\al}=\frac{c_w-s_\al(c_{s_\al w})}{x_\al}$. We show that $d_{w,\al}$ satisfy the big-torus GKM condition, that is, $d_{w,\al}-d_{s_{\be} w, \al}\in x_{\be} \hatS$  for any $\be$.

Denote $c_w-c_{s_\al w}=x_\al p, p\in \hat S$ and $x_{-\al}=-x_\al+x_\al^2 q, q\in \hat S$. If $\be=\al$, then we have 
\begin{eqnarray*}
d_{w,\al}-d_{s_\be w, \al}&=&\frac{c_w-s_\al(c_{s_\al w})-c_{s_\al w}+s_\al (c_w)}{x_\al}=\frac{x_\al p+s_\al (c_w)-s_\al(c_w-x_\al p)}{x_\al}\\
&=&p+\frac{x_{-\al}s_\al(p)}{x_\al}
=p-s_\al(p)+x_\al q,
\end{eqnarray*}
which is clearly a multiple of $x_\al$. If $\be\neq \al$, then 
\[
d_{w,\al}-d_{s_\be w,\al}=\frac{c_w-s_\al(c_{s_\al w})-(c_{s_\be w}-s_\al(c_{s_\al s_\be w}))}{x_\al}=\frac{c_w-s_\al(c_{s_\al w})-c_{s_\be w}+s_\al(c_{s_\al s_\be w})}{x_\al}.
\]
Since $x_\al, x_\be$ are coprime \cite[Lemma 2.2]{CZZ4}, it suffices to prove the numerator is divisible by $x_\be$. Note $c_w-c_{s_\be w}$ is already divisible by $x_\be$. Furthermore,  $c_w-c_{s_{\al}s_\be w}=c_w-c_{s_{s_\al(\be)}s_\al w}$,  so it is divisible by $s_{s_\al(\be)}$. Therefore, $-s_\al(c_{s_\al w})+s_\al(c_{s_\al s_\be w})$ is divisible by $s_\al(x_{s_\al(\be)})=x_\be$. The proof is finished. 
\end{proof}
Consequently, the $\odot$-action of $\hatD_{W_\aff}$ on $\hatD_{W_\aff}^*$ restricts to an action on $(\hatD_{W_\aff}^*)^W$. 
\subsection{}\label{subsec:char}

 Indeed, there is a  characteristic map 
\[
\mathbf{c}:\hatS\to \hatD^*_{W_\aff}, ~z\mapsto z\bullet \unit,
\]
whose geometric model is  the map sending a character of the torus to the first Chern class of the associated line bundle over the flag variety \cite[\S10]{CZZ3}. 
We then have a map
\[
\phi:\hatS\otimes_{\hatS^{W_\aff}}\hatS\to \hatD_{W_\aff}^*, ~a\otimes b\mapsto a\bfc(b)=\prod_w a w(b)\hat f_w.
\]
This is proved to be an isomorphism in some cases. 
It is easy to see that for any $z\in \bfD_{W_\aff}$,  there are the following commutative diagrams
\[
\xymatrix{\hatS\otimes_{\hatS^{W_\aff}}\hatS\ar[r]^-\phi\ar[d]^-{z\cdot\_\otimes \id}&\hatD_{W_\aff}^*\ar[d]^-{z\odot\_}\\
\hatS\otimes_{\hatS^{W_\aff}}\hatS\ar[r]^-\phi & \hatD_{W_\aff}^*}, \qquad 
\xymatrix{\hatS\otimes_{\hatS^{W_\aff}}\hatS\ar[r]^-\phi\ar[d]^-{\id\otimes z\cdot\_}& \hatD^*_{W_\aff}\ar[d]^-{z\bullet\_}\\
\hatS\otimes_{\hatS^{W_\aff}}\hatS\ar[r]^-\phi & \hatD_{W_\aff}^*}. 
\]

\section{Equivariant connective K-theory of the affine Grassmannian} \label{sec:connective}
As an application of the left Hecke action, we derive the  recursive formulas for this action on  bases in connective K-theory of $\Gr_G$.  In this section only, assume $F=F_\fc$. Our results specialize to equivariant K-theory (resp. equivariant cohomology) by letting $\fc=1$ (resp. $\fc=0$). In both cases, our results are only known for  flag varieties of finite root systems. Since $\hatX_i$ do not satisfy the braid relations, the result of this section do not generalize to general $F$.

\subsection{} Denote $\ep_w=(-1)^{\ell(w)}$ and  $\fc_w=c^{\ell(w)}$. We have  $x_{-\al}=\frac{x_\al}{\fc x_\al-1}$ and $\kappa_\al=\fc$ for any $\al$, and $\hatX_{I_w}$ can be denoted by $\hatX_w$. 

 Note that there is another operator $\hat Y_i=\hat Y_{\al_i}=\fc-\hatX_{\al_i}$ such that $\hat Y_{\al_i}^2=\fc \hat Y_{\al_i}$ and braid relations are satisfied.  This is the algebraic model of the composition $h_{T_\aff}(G_\aff/B_\aff)\to h_{T_\aff}(G_\aff/P_i)\to  h_{T_\aff}(G_\aff/B_\aff)$ where $P_i$ is the minimal parabolic subgroup corresponding to $\al_i\in I_\aff$. Moreover, we have 
\[\hat X_w=\sum_{v\le w}\ep_v\fc_w\fc_v^{-1}\hat Y_v.\] Most  properties of $\hatX_w$ are also satisfied by $\hat Y_w$, except for Lemma \ref{lem:bigWinv}. Indeed,  $\hat Y_w^*, w\in W_\aff^-$ is not $W$-invariant. 

Denote $x_\Phi=\prod_{\al\in \Phi^-}x_\al$. It is well known that $Y_{w_0}=\sum_{w\in W}\eta_w\frac{1}{x_\Phi}.$
 Moreover, the map $Y_{w_0}\bullet\_:\hatD_{W_\aff}^*\to (\hatD_{W_\aff}^*)^W$ is the algebraic model for the map $h_{T_\aff}(G_\aff/B_\aff)\to h_{T_\aff}(\Gr_G)$. We first compute the image of the two bases via this  map. 
\begin{lemma}Let $F=F_\fc$. For any $w\in W_\aff$ and $u=u_1u_2, u_1\in W^-_\aff, u_2\in W$, we have\[
Y_{w_0}\bullet \hat X_{u_1u_2}^*=\ep_{u_2}\fc_{w_0}\fc_{u_2}^{-1}\hat X_{u_1}^*, ~Y_{w_0}\bullet \hat Y_{w}^*=\sum_{v_1v_2\ge w, v_1\in W_\aff^-, v_2\in W}\ep_w\ep_{v_2}\fc_{v_1w_0}\fc_w^{-1}\hat X_{v_1}^*.
\]
In particular, $Y_{w_0}\bullet\hat Y_w^*, w\in W_\aff^-$ is a basis of $(\hatD_{W_\aff}^*)^W$ if and only if $\fc\in R^\times$.
\end{lemma}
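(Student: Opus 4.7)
The plan is to work through the dual pairing with the basis $\{\hat X_v\}_{v \in W_\aff}$ of $\hat D_{W_\aff}$, exploiting the involutive change of basis between $\hat X$ and $\hat Y = \fc - \hat X$ that holds because $F = F_\fc$. For the first formula, I would compute the coefficient of $\hat X_v^*$ in $Y_{w_0} \bullet \hat X_u^*$ by
\[
(Y_{w_0} \bullet \hat X_u^*)(\hat X_v) = \hat X_u^*(\hat X_v\, Y_{w_0}),
\]
and split on the decomposition $v = v_1 v_2$ with $v_1 \in W_\aff^-$, $v_2 \in W$, using $W$-compatible reduced expressions so that $\hat X_v = \hat X_{v_1}\hat X_{v_2}$. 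If $v_2 \ne e$, pick a reduced expression of $v_2$ ending in $s_j$; since $w_0$ is longest in $W$, the index $j \in I$ is a right descent of $w_0$, so by the braid relations for $\hat Y$ one has $Y_{w_0} = \hat Y_j\hat Y_{s_j w_0}$, and the relation $\hat X_j \hat Y_j = \hat X_j(\fc - \hat X_j) = 0$ forces $\hat X_v\, Y_{w_0} = 0$. Thus only $v \in W_\aff^-$ contribute.

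For $v = v_1 \in W_\aff^-$, I would expand $Y_{w_0}$ in the $\hat X$-basis via the involutive change of basis
\[
\hat Y_w = \sum_{v \le w} \ep_v\, \fc_w\, \fc_v^{-1}\, \hat X_v,
\]
which for $w = w_0$ reduces the sum to $v \in W$ by maximality of $w_0$ in $W$. Using $W$-compatibility and $\ell(v_1 u) = \ell(v_1) + \ell(u)$ for $u \in W$ gives $\hat X_{v_1}\hat X_u = \hat X_{v_1 u}$, so
\[
\hat X_{v_1}\, Y_{w_0} = \sum_{u \in W} \ep_u\, \fc_{w_0}\, \fc_u^{-1}\, \hat X_{v_1 u}.
\]
Pairing with $\hat X_{u_1 u_2}^*$ selects the unique term with $v_1 u = u_1 u_2$; uniqueness of the $W_\aff^- \cdot W$ decomposition then forces $v_1 = u_1$ and $u = u_2$, delivering the first formula.

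The second formula will follow by dualizing the change of basis. The matrix $A_{w,v} = \ep_v \fc_w \fc_v^{-1}\cdot[v \le w]$ satisfies $A^2 = I$, which reduces to the Bruhat-interval identity $\sum_{u \le v \le w} \ep_v = \ep_u\, \de_{u,w}$ (equivalent to Verma's formula $\mu(u,w) = (-1)^{\ell(w) - \ell(u)}$). Transposing yields $\hat Y_w^* = \sum_{v \ge w} \ep_w \fc_v \fc_w^{-1}\, \hat X_v^*$. Feeding this into the first formula term-by-term and using $\fc_{v_1}\fc_{w_0} = \fc_{v_1 w_0}$ (valid since $\ell(v_1 w_0) = \ell(v_1) + \ell(w_0)$ for $v_1 \in W_\aff^-$) assembles the stated expression.

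For the basis claim, I would examine the transition matrix between $\{\hat X_{v_1}^*\}_{v_1 \in W_\aff^-}$ and $\{Y_{w_0} \bullet \hat Y_w^*\}_{w \in W_\aff^-}$: its entries all carry the factor $\fc_{v_1 w_0}$, so solving for each $\hat X_{v_1}^*$ requires inverting $\fc^{\ell(w_0)}$, forcing $\fc \in R^\times$. The hard part will be controlling the inner sums $\sum_{v_2 \in W:\, v_1 v_2 \ge w}\ep_{v_2}$ via the Bruhat-theoretic vanishing identities, identifying a triangular structure for the matrix with respect to length on $W_\aff^-$, and checking that the diagonal entries become units precisely when $\fc$ is a unit, which then closes the converse direction.
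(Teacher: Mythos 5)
Your derivations of the two displayed identities are correct and follow essentially the same route as the paper: you pair against the basis $\hat X_v$, kill the terms with $v_2\neq e$ via $\hat X_j\hat Y_j=0$ (the paper simply quotes $\hat X_w Y_{w_0}=0$ for $e \ne w\in W$), expand $Y_{w_0}=\hat Y_{w_0}=\sum_{w'\le w_0}\ep_{w'}\fc_{w_0}\fc_{w'}^{-1}\hat X_{w'}$, and then obtain the second identity by dualizing the involutive change of basis, $\hat Y_w^*=\sum_{v\ge w}\ep_w\fc_v\fc_w^{-1}\hat X_v^*$ (which the paper asserts as ``easy to see''; your justification via the M\"obius identity $\sum_{u\le v\le w}\ep_v=\ep_u\de_{u,w}$ is a fine way to supply it). Up to this point your argument is complete and matches the paper.

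The gap is the final claim, which you explicitly defer (``the hard part will be controlling the inner sums \dots which then closes the converse direction''), and the route you sketch cannot be completed as described. For $w\in W_\aff^-$ and $v_1=w$, \emph{every} $v_2\in W$ satisfies $v_1v_2\ge w$ (lengths add, so $w\le wv_2$ by the subword property), hence the inner sum you would need to control is $\sum_{v_2\in W}\ep_{v_2}=0$ whenever $W\neq\{e\}$: the would-be diagonal entry vanishes, so there is no triangular structure with diagonal a unit multiple of $\fc_{w_0}$. In fact the cancellation is total: since the $\hat Y_i$ satisfy braid relations and $\hat Y_i^2=\fc\hat Y_i$, one has $\hat Y_{v_1v_2}\hat Y_{w_0}=\fc_{v_2}\hat Y_{v_1w_0}$, so
\[
(Y_{w_0}\bullet \hat Y_w^*)(\hat Y_{v_1v_2})=\fc_{v_2}\,\de_{w,\,v_1w_0},
\]
which is identically zero when $w\in W_\aff^-$ (a minimal coset representative is never of the form $v_1w_0$). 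So the elements $Y_{w_0}\bullet\hat Y_w^*$, $w\in W_\aff^-$, are all zero and cannot form a basis for any $\fc$; only the indices $w\in W_\aff^-w_0$ give nonzero images. Note that the paper's own one-line justification (``upper triangular with diagonal entries $\ep_w\fc_{w_0}$'') overlooks exactly this cancellation, so you should not expect to recover the stated basis criterion by sharpening the Bruhat sign-sum analysis; as written, the last sentence of the lemma requires reindexing (e.g.\ by the maximal coset representatives $ww_0$) or some other correction. Separately, your claim that every entry of the transition matrix carries the factor $\fc_{w_0}$ is inaccurate: the factor is $\fc^{\ell(v_1)+\ell(w_0)-\ell(w)}$, which equals $1$ when $\ell(w)=\ell(v_1)+\ell(w_0)$, so even the ``only if'' direction is not established by the argument you give.
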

\begin{proof}
For each $v\in W_\aff$, write $v=v_1v_2,v_1\in W_\aff^-, v_2\in W$. From $\hat X_wY_{w_0}=0, w\in W$, we have
\[
(Y_{w_0}\bullet \hat X_{u_1u_2}^*)(\hat X_{v_1v_2})=\hat X_{u_1u_2}^*(\hat X_{v_1v_2}Y_{w_0})=\de_{v_2,e}\hat X_{u_1u_2}^*(\hat X_{v_1}\sum_{w'\le w_0}\ep_{w'}\fc_{w_0}\fc_{w'}^{-1}\hat X_{w'})=\de_{v_2,e}\de_{v_1,u_1}\ep_{u_2}\fc_{w_0}\fc_{u_2}^{-1}. 
\]
This proves the first identity. For the second one, it is easy to see that $\hat Y_w^*=\sum_{v\ge w}\ep_w\fc_v\fc_w^{-1}\hat X_v^*.$
So 
\[
Y_{w_0}\bullet \hat Y_{w}^*=Y_{w_0}\bullet \sum_{v\ge w}\ep_w\fc_v\fc_w^{-1}\hat X_v^*=\sum_{v_1v_2\ge w, v_1\in W_\aff^-, v_2\in W}\ep_w\ep_{v_2}\fc_{v_1w_0}\fc_w^{-1}\hat X_{v_1}^*.
\]
This proves the second identity. 

The transition matrix between $\hat X_{v}^*, v\in W_\aff^-$ and $Y_{w_0}\bullet \hat Y_w^*, w\in W_\aff^-$ is upper triangular with diagonal entries $\ep_w\fc_{w_0}$, so  the last statement  follows. 
\end{proof}
\subsection{}Before computing the $\odot$-action, we need to prove some identities in $\hatD_{W_\aff}$. 
\begin{lemma}\label{lem:b}Let $F=F_\fc$. Writing $\eta_u=\sum_{v\le u}\hat b_{u,v}\hat X_v=\sum_{v\le u}\hat b^Y_{u,v}\hat Y_v$, then 
\begin{eqnarray*}
\hat b_{s_iu, v}&=&\left\{\begin{array}{cc}s_{i}(\hat b_{u,v}),& s_iv>v;\\
(1-\fc x_i)s_i(\hat b_{u,v})-x_is_i(\hat b_{u,s_iv}), & s_iv<v.\end{array} \right.\\
\hat b^Y_{s_iu, v}&=&\left\{\begin{array}{cc}(1-\fc x_i)s_i(\hat b^Y_{u,v}),& s_iv>v;\\
x_is_i(\hat b^Y_{u,s_iv})+s_i(\hat b^Y_{u,v}), & s_iv<v.\end{array} \right.
\end{eqnarray*}
\end{lemma}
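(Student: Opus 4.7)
The plan is to compute $\eta_{s_i u} = \eta_i \eta_u$ in two different ways: first by expanding $\eta_u$ in the $\{\hat X_v\}$ basis, and then by expanding it in the $\{\hat Y_v\}$ basis. In each case I would push $\eta_i$ through the coefficients using the commutation rule $\eta_i \cdot c = s_i(c)\, \eta_i$ for $c \in \hatQ$, reduce $\eta_i \hat X_v$ (resp.\ $\eta_i \hat Y_v$) back into the same basis, and then read off the coefficient of $\hat X_w$ (resp.\ $\hat Y_w$). The two universal inputs are the identities $\eta_i = 1 - x_i \hat X_i$ and $\eta_i = (1 - \fc x_i) + x_i \hat Y_i$ (the latter following from $\hat Y_i = \fc - \hat X_i$), together with the product rules
\[
\hat X_i \hat X_v = \begin{cases}\hat X_{s_i v}, & s_i v > v,\\ \fc \, \hat X_v, & s_i v < v,\end{cases} \qquad \hat Y_i \hat Y_v = \begin{cases}\hat Y_{s_i v}, & s_i v > v,\\ \fc \, \hat Y_v, & s_i v < v,\end{cases}
\]
which follow from the braid relations (available because $F = F_\fc$) together with the quadratic relations $\hat X_i^2 = \fc \hat X_i$ and $\hat Y_i^2 = \fc \hat Y_i$.

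For the first recursion, substituting $\eta_i = 1 - x_i \hat X_i$ and $\eta_u = \sum_v \hat b_{u,v} \hat X_v$ gives
\[
\eta_{s_i u} = \sum_v s_i(\hat b_{u,v}) \, \hat X_v - x_i \sum_v s_i(\hat b_{u,v}) \, \hat X_i \hat X_v.
\]
Reading off the coefficient of $\hat X_w$: if $s_i w > w$, only $v = w$ in the first sum contributes, yielding $s_i(\hat b_{u,w})$. If $s_i w < w$, three terms combine: $v = w$ in the first sum gives $s_i(\hat b_{u,w})$; $v = w$ in the second sum gives $-\fc x_i \, s_i(\hat b_{u,w})$ (via $\hat X_i \hat X_w = \fc \hat X_w$); and $v = s_i w$ in the second sum gives $-x_i \, s_i(\hat b_{u, s_i w})$ (via $\hat X_i \hat X_{s_i w} = \hat X_w$). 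The total is exactly $(1 - \fc x_i) s_i(\hat b_{u,w}) - x_i s_i(\hat b_{u, s_i w})$.

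The $\hat Y$-version is strictly parallel. Substituting $\eta_i = (1 - \fc x_i) + x_i \hat Y_i$ and $\eta_u = \sum_v \hat b^Y_{u,v} \hat Y_v$ and using the $\hat Y$ product rule: if $s_i w > w$, only $v = w$ from the $(1 - \fc x_i)$-part contributes, giving $(1 - \fc x_i)\, s_i(\hat b^Y_{u,w})$. If $s_i w < w$, the two contributions on $v = w$ combine as $(1 - \fc x_i) + x_i \cdot \fc = 1$, giving $s_i(\hat b^Y_{u,w})$, while $v = s_i w$ contributes $x_i\, s_i(\hat b^Y_{u, s_i w})$, matching the stated formula.

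The only real obstacle is that this whole scheme relies on the braid and quadratic relations for $\hat X_i$ and $\hat Y_i$, which is exactly why the statement is restricted to $F = F_\fc$; for general $F$ the elements $\hat X_{I_w}$ depend on the reduced word $I_w$ and no such clean recursion is available. Given these relations, the remainder is just careful bookkeeping of which $v$ contributes to the coefficient of $\hat X_w$ (resp.\ $\hat Y_w$) in each Bruhat length case.
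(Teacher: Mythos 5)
Your proof is correct and takes essentially the same route as the paper: expand $\eta_{s_iu}=\eta_i\eta_u$, substitute $\eta_i=1-x_i\hat X_i$ (resp. $\eta_i=(1-\fc x_i)+x_i\hat Y_i$), commute the coefficients past $\eta_i$ via $s_i$, and reduce $\hat X_i\hat X_v$ (resp. $\hat Y_i\hat Y_v$) using the quadratic and braid relations valid for $F=F_\fc$. The paper only writes out the $\hat X$-case (organizing the sum over pairs $\{v,s_iv\}$ rather than reading off the coefficient of each $\hat X_w$), but the bookkeeping is identical to yours.
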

\begin{proof}We prove the first one, and the second one follows similarly. Denote ${}^iW_\aff=\{v\in W_\aff|s_iv>v\}$. We have 
\begin{eqnarray*}
\eta_{s_iu}&=&\eta_i\eta_u=\eta_i\sum_{ v\in {}^iW_\aff}\hat b_{u,v}\hatX_v+\hat b_{u,s_iv}\hatX_{s_iv}=\sum_{v\in {}^iW_\aff}s_i(\hat b_{u,v})\eta_i\hatX_v+s_i(\hat b_{u,s_iv})\eta_i\hatX_{s_iv}\\
&=&\sum_{v\in {}^iW_\aff}s_i(\hat b_{u,v})(1-x_i\hatX_i)\hatX_v+s_i(\hat b_{u,s_iv})(1-x_i\hatX_i)\hatX_{s_iv}\\
&=&\sum_{v\in {}^iW_\aff}s_i(\hat b_{u,v})(\hatX_v-x_i\hatX_{s_iv})+s_i(\hat b_{u,s_iv})\hatX_{s_iv}-\fc x_is_i(\hat b_{u,s_iv})\hatX_{v}\\
&=&\sum_{v\in {}^iW_\aff}s_i(\hat b_{u,v})\hatX_v+(s_i(\hat b_{u,s_iv})(1-\fc x_i)-x_is_i(\hat b_{u,v}))\hatX_{s_iv}.
\end{eqnarray*}
The conclusion then follows.
\end{proof}
Note that if $v\in W_\aff^-$ and $s_iv<v$, then $s_iv\in W_\aff^-$. We have the following recursive formula, whose proof follows from the definition and Lemma \ref{lem:b}.
\begin{theorem}\label{thm:con}
For $F=F_\fc$, with $i\in I_\aff$, we have
\begin{eqnarray*}
\hatX_{-i}\odot \hatX_v^*&=&\left\{\begin{array}{cc}
0, &s_iv>v, \\
\fc\hatX_v^*+\hatX_{s_iv}^*, &s_iv<v,
\end{array}\right.\\
\hat Y_{-i}\odot \hat Y_v^*&=&\left\{\begin{array}{cc}
0, &s_iv>v, \\
\fc\hat Y_v^*+\hat Y_{s_iv}^*, &s_iv<v. 
\end{array}\right.
\end{eqnarray*}
Here \[\hat X_{-i}=\eta_{w_0}\hat X_i\eta_{w_0}=\frac{1}{x_{-i}}(1-\eta_i), ~\hat Y_{-i}=\eta_{w_0}\hat Y_i\eta_{w_0}=\frac{1}{x_i}+\frac{1}{x_{-i}}\eta_i.\]
Consequently, if $v\in W_\aff^-$, we have 
\[
\hat Y_{-i}\odot (Y_{w_0}\bullet Y_v^*)=\left\{\begin{array}{cc}
0, &s_iv>v, \\
\fc(Y_{w_0}\bullet \hat Y_v^*)+(Y_{w_0}\bullet \hat Y_{s_iv}^*), &s_iv<v. 
\end{array}\right.
\]
\end{theorem}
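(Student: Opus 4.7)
The plan is to expand both sides in the $\hat f_w$-basis of $\hatQ_{W_\aff}^*$ and reduce the computation to the recursion for the coefficients $\hat b_{u,v}$ and $\hat b^Y_{u,v}$ provided by Lemma \ref{lem:b}. Using the definition of the $\odot$-action together with the explicit formulas $\hat X_{-i} = \frac{1}{x_{-i}}(1-\eta_i)$ and $\hat Y_{-i} = \frac{1}{x_i} + \frac{1}{x_{-i}}\eta_i$, I would first derive the pointwise formulas
\[
\hat X_{-i}\odot\prod_u c_u\hat f_u = \prod_u \frac{c_u - s_i(c_{s_iu})}{x_{-i}}\hat f_u, \qquad \hat Y_{-i}\odot\prod_u c_u\hat f_u = \prod_u \left(\frac{c_u}{x_i} + \frac{s_i(c_{s_iu})}{x_{-i}}\right)\hat f_u.
\]
Pairing \eqref{eq:basis} with the two bases gives $\hat X_v^* = \prod_{u\ge v}\hat b_{u,v}\hat f_u$ and $\hat Y_v^* = \prod_{u \ge v}\hat b^Y_{u,v}\hat f_u$, so the theorem reduces to identifying the coefficient of each $\hat f_u$ in the displayed expressions when $c_u = \hat b_{u,v}$ or $c_u = \hat b^Y_{u,v}$.

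Second, I would split into the two cases of Lemma \ref{lem:b}. If $s_iv > v$, the lemma gives $\hat b_{s_iu,v} = s_i(\hat b_{u,v})$, so $\hat b_{u,v} - s_i(\hat b_{s_iu,v}) = 0$ and hence $\hat X_{-i}\odot\hat X_v^* = 0$. If $s_iv < v$, substituting $\hat b_{s_iu,v} = (1-\fc x_i)s_i(\hat b_{u,v}) - x_is_i(\hat b_{u,s_iv})$ and applying $s_i$ yields, after dividing by $x_{-i}$, the coefficient $\fc\,\hat b_{u,v} + \hat b_{u,s_iv}$; summing over $u$ gives $\fc\hat X_v^* + \hat X_{s_iv}^*$. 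The $\hat Y$-case is parallel: the identity $\tfrac{1}{x_i} + \tfrac{1}{x_{-i}} = \kappa_i = \fc$, valid for $F=F_\fc$, forces the $s_iv > v$ coefficient to vanish, and in the $s_iv < v$ case the substitution from Lemma \ref{lem:b} again produces $\fc\,\hat b^Y_{u,v} + \hat b^Y_{u,s_iv}$.

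For the consequence on $W$-invariants, I would invoke the fact that the $\odot$- and $\bullet$-actions commute, so
\[
\hat Y_{-i}\odot(Y_{w_0}\bullet \hat Y_v^*) = Y_{w_0}\bullet(\hat Y_{-i}\odot\hat Y_v^*),
\]
and then substitute the formula just established; the standard fact that $v \in W_\aff^-$ and $s_iv < v$ together imply $s_iv \in W_\aff^-$ ensures that the right-hand side lies in $(\hatD_{W_\aff}^*)^W$.

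I do not anticipate a conceptual obstacle: once the duality and action formulas are set up, the proof is direct bookkeeping with Lemma \ref{lem:b}. The only mildly subtle points are applying $s_i$ correctly to expressions containing $x_i$ and $\fc$ (e.g.\ $s_i(1 - \fc x_i) = 1 - \fc x_{-i}$) and exploiting $\tfrac{1}{x_i} + \tfrac{1}{x_{-i}} = \fc$ to make the $\hat Y$-coefficient collapse in the first case.
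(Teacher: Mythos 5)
Your proposal is correct and follows essentially the same route as the paper: expand $\hat X_v^*$ and $\hat Y_v^*$ in the $\hat f_u$-basis, compute the $\odot$-action of $\hat X_{-i}$ and $\hat Y_{-i}$ coefficientwise, substitute the recursions of Lemma \ref{lem:b} (using $\kappa_i=\fc$ for the $\hat Y$-case), and deduce the last statement from the commutativity of the $\bullet$- and $\odot$-actions together with the fact that $s_iv\in W_\aff^-$ when $v\in W_\aff^-$ and $s_iv<v$. Your write-up just carries out explicitly the substitution the paper leaves as "plugging in Lemma \ref{lem:b}."
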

\begin{proof}We have 
\[
\hatX_{-i}\odot \hat X_v^*=(\frac{1}{x_{-i}}-\frac{1}{x_{-i}}\eta_i)\odot\prod_{u\ge v}\hat b_{u,v}\hat f_u=\prod_{u}\frac{\hat b_{u,v}}{x_{-i}}\hat f_u-\prod_u\frac{s_i(\hat b_{u,v})}{x_{-i}}\hat f_{s_iu}=\prod_u\frac{\hat b_{u,v}-s_i(\hat b_{s_iu,v})}{x_{-i}}\hat f_u.
\]
Plugging the formula in Lemma \ref{lem:b},  we obtain the formula. 

The formula for $\hat Y_{-i}\odot \hat Y_v^*$ follows similarly. From the commutativity of the two actions $\bullet$ and $\odot$, one obtains the last statement. 
\end{proof}

\section{FADA for the small torus}
We repeat the  construction of FADA for the  small torus, which is very similar as above. 

\subsection{}
Let $S$ be the formal group algebra associated to $T^*$, that is, it is (non-canonically) isomorphic  a power series  ring of rank $n$. When the formal group law $F=F_\fc$, we can again take the polynomial version, i.e., see Remark \ref{rm:poly}.  Let $\cQ=S[\frac{1}{x_\al},\al\in \Phi]$, $\cQ_{W_\aff}=\cQ\rtimes R[W_\aff]$, $\cQ_{Q^\vee}=\cQ\rtimes R[Q^\vee]$. For any $\al\in \Phi$, let $\ka_\al=\frac{1}{x_\al}+\frac{1}{x_{-\al}}$ and $\kappa_{\al_0}=\frac{1}{x_{-\theta}}+\frac{1}{x_\theta}$.  We have the projection
\[
\pr:\cQ_{W_\aff}\to \cQ_{Q^\vee}, ~\eta_{t_\la w}\mapsto \eta_{t_\la}, \quad w\in W.
\]
Define
\[
X_\al=\frac{1}{x_\al}(1-\eta_\al), \quad  X_{\al_0}=\frac{1}{x_{-\theta}}(1-\eta_{s_0}),\quad \al\in \Phi.
\]
For simplicity, denote $x_{\pm i}=x_{\pm \al_i}, X_i=X_{\al_i},  \eta_i=\eta_{s_i}$, $X_0=X_{\al_0}$. They satisfy relations similar as that of $\hatX_i$. One can define $X_{I_w}$  for any reduced sequence $I_w$ of $w$, which depends only on $w$ if $F=F_\fc$. 

\begin{remark}\label{rm:identifyK}Consider  K-theory, in which case  $F=F_\fc$ with $\fc=1$. Our  $-X_{-\al_i}$ is the $T_i$ in \cite{LSS10, LLMS18}. Our $1-X_{\al_i}$ coincides with the $D_i$ in \cite{K18}. For cohomology, $\fc=0$, $\kappa_\al=0$, and our $X_i$ is the $A_i$ in \cite[Proposition 2.11]{P97} and \cite{L06}. 
\end{remark}

\begin{lemma}\label{lem:przero}
We have $\pr(zX_i)=0$ if $z\in \cQ_{W_\aff}, i\in I$.
\end{lemma}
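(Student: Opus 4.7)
My plan is to reduce, by left $\cQ$-linearity of $\pr$, to the case $z = \eta_v$ for a single $v \in W_\aff$, and then expand $\eta_v X_i$ using the twisted group algebra rule. The key observation is that for $i \in I$ (not $i = 0$) the simple reflection $s_i$ lies in the finite Weyl group $W$, so right multiplication by $s_i$ preserves each coset $t_\la W$ of the decomposition $W_\aff = \bigsqcup_{\la\in Q^\vee} t_\la W$ that defines $\pr$.

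Concretely, since $X_i = \frac{1}{x_i}(1 - \eta_i)$, the rule $c\eta_w \cdot c'\eta_{w'} = c\, w(c')\,\eta_{ww'}$ gives
\[
\eta_v X_i \;=\; \tfrac{1}{x_{v(\al_i)}}\bigl(\eta_v - \eta_{v s_i}\bigr).
\]
Writing $v = t_\la w$ with $w \in W$, we have $v s_i = t_\la(w s_i)$ and $w s_i \in W$ — this is precisely where the hypothesis $i \neq 0$ enters. Hence $\pr(\eta_v) = \eta_{t_\la} = \pr(\eta_{v s_i})$, so $\pr(\eta_v X_i) = 0$. Expanding an arbitrary $z = \sum_v a_v \eta_v$ with $a_v \in \cQ$ and using that $\pr$ is $\cQ$-linear on the left then yields $\pr(z X_i) = 0$.

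The proof is essentially a one-line computation, so there is no serious obstacle to overcome. The only point worth highlighting is the implicit contrast with the omitted case $i = 0$: the affine reflection $s_{\al_0}$ factors as $s_\theta \cdot t_{\theta^\vee}$ with a nontrivial translation part, so right multiplication by $s_0$ shifts the $Q^\vee$-component of the coset and the analogous cancellation fails. This is why the lemma is stated only for the finite simple roots $i \in I$ rather than for all $i \in I_\aff$.
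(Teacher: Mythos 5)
Your proof is correct and follows essentially the same route as the paper: expand $\eta_v X_i = \frac{1}{x_{v(\al_i)}}(\eta_v - \eta_{vs_i})$ via the twisted-product rule and observe that $\pr(\eta_v)=\pr(\eta_{vs_i})$ because $s_i\in W$ preserves the cosets $t_\la W$; your version just makes the coset argument (and the contrast with $i=0$) explicit where the paper leaves it implicit.
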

\begin{proof}Let $z=p\eta_w, p\in \cQ, w\in W_\aff$ , then 
\[
\pr(zX_i)=\pr(p\eta_wX_i)=\pr(\frac{p}{w(x_i)}(\eta_w-\eta_{ws_i}))=\frac{p}{w(x_i)}(\pr(\eta_w)-\pr(\eta_{ws_i}))=0. 
\]
\end{proof}

Define $\bfD_{W_\aff}$ to be the subalgebra of $\cQ_{W_\aff}$ generated by $S$ and $X_i, i\in I_\aff$, and $\bfD_{W_\aff/W}=\pr(\bfD_{W_\aff})$. Then $\bfD_{W_\aff}$ is a free left  $S$-module with basis $X_{I_w}, w\in W_\aff$.  Denote $\frakX_{I_w}=\pr(X_{I_w}), w\in W_\aff^-$. 
\begin{lemma}If $I_w,w\in W_\aff$ are $W$-compatible,  then the set $\{\frakX_{I_w}|w\in W_\aff^-\}$ is a basis of the left $S$-module $\bfD_{W_\aff/W}$.
\end{lemma}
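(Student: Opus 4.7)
The plan is to verify spanning and $S$-linear independence of the set $\{\frakX_{I_w}:w\in W_\aff^-\}$ in $\bfD_{W_\aff/W}$ separately, relying respectively on Lemma~\ref{lem:przero} and on a leading-term comparison in $\cQ_{W_\aff}$.

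For spanning, I would decompose any $w\in W_\aff$ uniquely as $w=uv$ with $u\in W_\aff^-$ and $v\in W$. By $W$-compatibility, $X_{I_w}=X_{I_u}X_{I_v}$, and the sub-word $I_v$ uses only simple reflections $s_j$ with $j\in I$ (no affine letter $0$) since $v\in W$. Iterating Lemma~\ref{lem:przero} on the rightmost factor gives $\pr(X_{I_w})=0$ whenever $v\neq e$. Since $\{X_{I_w}:w\in W_\aff\}$ is an $S$-basis of $\bfD_{W_\aff}$ and $\pr$ is $S$-linear, this immediately shows that $\bfD_{W_\aff/W}=\pr(\bfD_{W_\aff})$ is $S$-spanned by the $\frakX_{I_u}=\pr(X_{I_u})$ for $u\in W_\aff^-$.

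For linear independence I would expand $X_{I_u}=\sum_{w'\le u}a_{I_u,w'}\eta_{w'}$ in the $\cQ$-basis $\{\eta_{w}\}$ of $\cQ_{W_\aff}$; the leading coefficient $a_{I_u,u}$ is a nonzero product of factors $1/x_\al$ (with the factor associated to $\al_0$ read as $x_{-\theta}$ in the small-torus setting), in particular nonzero in $\cQ$. Suppose a nontrivial relation $\sum_{u\in W_\aff^-}c_u\frakX_{I_u}=0$ holds with $c_u\in S$, and pick $u_0\in W_\aff^-$ of maximal length with $c_{u_0}\neq 0$; set $\la_0=\la(u_0)$. Reading off the coefficient of $\eta_{t_{\la_0}}$ in $\pr(\sum_u c_u X_{I_u})=\sum_{u,w'}c_u a_{I_u,w'}\eta_{t_{\la(w')}}$ gives a sum over pairs $(u,w')$ with $c_u\neq 0$, $w'\le u$, and $\la(w')=\la_0$. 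The last condition places $w'$ in the coset $u_0W$, so $w'=u_0v'$ for some $v'\in W$ with $\ell(w')=\ell(u_0)+\ell(v')$; combining this with $\ell(w')\le\ell(u)\le\ell(u_0)$ forces $v'=e$, $w'=u_0$, and then $u\ge u_0$ with equal lengths gives $u=u_0$. The coefficient therefore reduces to $c_{u_0}a_{I_{u_0},u_0}\neq 0$, contradicting the vanishing of $\pr(\sum_u c_u X_{I_u})$.

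The main obstacle is the length/Bruhat bookkeeping in the independence step: one has to observe that the maximality of $\ell(u_0)$ isolates the single coset $u_0W$ in the relevant leading cell, and that $u_0$ being the minimum in this coset collapses the sum to the single term $u=w'=u_0$. The small-torus specificity enters only through the replacement of $x_{\al_0}$ by $x_{-\theta}$ when verifying the nonvanishing of $a_{I_{u_0},u_0}$; once that is in hand the argument is otherwise parallel to its big-torus counterpart referenced in Lemma~\ref{lem:bigWinv}.
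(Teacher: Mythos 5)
Your proof is correct and follows essentially the route the paper intends: the spanning step is exactly the ``follows easily from Lemma~\ref{lem:przero}'' observation (using $W$-compatibility to write $X_{I_w}=X_{I_u}X_{I_v}$ and kill the terms with $v\neq e$ under $\pr$), and your leading-term/triangularity argument for $S$-linear independence, using that $a_{I_{u_0},u_0}$ is a unit in the domain $\cQ$ and that minimality of $u_0$ in its coset isolates the single contribution $c_{u_0}a_{I_{u_0},u_0}$, is the content the paper delegates to \cite[Lemma 11.3]{CZZ2}. No gaps; the length/Bruhat bookkeeping you spell out is exactly what makes the cited argument work in the small-torus setting.
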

\begin{proof}They follow easily from Lemma \ref{lem:przero}. See \cite[Lemma 11.3]{CZZ2}.
\end{proof}

The projection $\frakp:T_\aff^*\to T^*, \mu+k\de\mapsto \mu$ induces  projections $\hatS\to S$,  $\hatQ\to \cQ$ and $\hatQ_{W_\aff}\to \cQ_{W_\aff}$. Clearly  $\frakp(\hatX_{\al_i})=X_{\al_i}$ and $\frakp(\hatX_{I_w})=X_{I_w}$, so $\fp(\hatD_{W_\aff})=\bfD_{W_\aff}$. More explicitly,   we have
\begin{eqnarray*}
\hatX_{I_w}=\sum_{v\le w}\hat a_{I_w,v}\eta_v\in \hatQ_{W_\aff}, \quad X_{I_w}=\sum_{v\le w}a_{I_w,v}\eta_v\in \cQ_{W_\aff}, \quad \frakp(\hat a_{I_w,v})=a_{I_w,v}\in \cQ,\\
\eta_w=\sum_{v\le w}\hat b_{w,I_v}\hatX_{I_v}\in \hatQ_{W_\aff}, \quad \eta_w=\sum_{v\le w}b_{w,I_v}X_{I_v}\in \cQ_{W_\aff}, \quad \frakp(\hat b_{w,I_v})=b_{w,I_v}\in S. 
\end{eqnarray*}
Note that the embedding $i:\cQ\to \hatQ$  induces a section $\cQ_{W_\aff}\to \hatQ_{W_\aff}$ of $\frakp$. However, it does not map $\bfD_{W_\aff}$ to $\hatD_{W_\aff}$. For example, $X_0$ is mapped to $\frac{x_{-\theta+\de}}{x_{-\theta}}\hatX_0$ which does not belong to $ \hatD_{W_\aff}$. 

\subsection{}
As before, we can take the duals, which will give us $\cQ$-modules  $\cQ_{W_\aff}^*$, $\cQ_{Q^\vee}^*$, and $S$-modules  $\bfD_{W_\aff}^*$,  $\bfD_{W_\aff/W}^*$. The elements dual to 
\[\eta_w,X_{I_w}\in \bfD_{W_\aff}\subset \cQ_{W_\aff}, ~\eta_{t_\la}, \frakX_{I_w}\in \bfD_{W_\aff/W}\subset \cQ_{Q^\vee},\]
are denoted by 
\[f_w, X_{I_w}^* \in \bfD_{W_\aff}^*\subset \cQ_{W_\aff}^*, ~ f_{t_\la}, \frakX_{I_w}^* \in \bfD_{W_\aff/W}^*\subset \cQ_{Q^\vee}^*,\] correspondingly. Note that the notation $f_{t_\la}$ can be thought as in $\cQ_{W_\aff}^*$ and $\cQ_{Q^\vee}^*$, just like $\eta_{t_\la}$ can be thought as in $\cQ_{W_\aff}$ and $\cQ_{Q^\vee}$. Similar as Proposition \ref{prop:bigtorusGKM}, we have
\begin{equation}\label{eq:smalltorusmap}
\bfD_{W_\aff}^*=\{f\in \cQ_{W_\aff}^*|f(\bfD_{W_\aff})\subset S\}. 
\end{equation}
Moreover, by definition, the dual map $\pr^*:\cQ_{Q^\vee}^*\to \cQ_{W_\aff}^*$ satisfies
\[
\pr^*(f_{t_\la})=\sum_{w\in W}f_{t_\la w}.
\]

Following from the definition, we have
\[
\hatX_{I_w}^*=\prod_{v\ge w}\hat b_{v,I_w}\hat f_v\in \hatD_{W_\aff}^*, \quad X_{I_w}^*=\prod_{v\ge w} b_{v,I_w} f_v\in \bfD_{W_\aff}^*.
\]
Since  $\frakp(\hat b_{v,I_w})=b_{v,I_w}$, so the map $\frakq:\hatQ_{W_\aff}^*\to \cQ_{W_\aff}^*, \prod_{w}a_w\hat f_w\mapsto \prod_w\frakp(a_w)f_w$ induces a map $\frakq:\hatD_{W_\aff}^*\to \bfD_{W_\aff}^*$ such that $\frakq(\hatX^*_{I_w})=X_{I_w}^*$. Moreover, 
since \[\frakp^*(X_{I_w}^*)(\hatX_{I_v})=X_{I_w}^*(\frakp(\hatX_{I_v}))=X_{I_w}^*(X_{I_v})=\de_{w,v},\] so $\frakp^*(X_{I_w}^*)=\hatX_{I_w}^*.$ Note that neither $\frakq$ nor $\frakp^*$ are  isomorphisms, since the domains and targets are modules over different rings. 

Similar as Lemma \ref{lem:bigWinv}, we have
\begin{lemma}If $I_w, w\in W_\aff$ are $W$-compatible, then the set $X_{I_w}^*, w\in W_\aff^-$ form a basis of $(\cQ_{W_\aff}^*)^W$ and of  $(\bfD_{W_\aff}^*)^W$, respectively. 
\end{lemma}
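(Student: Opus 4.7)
The plan is to transport the big-torus Lemma \ref{lem:bigWinv} to the small torus through the projection $\frakp$ and its dual $\frakq$, using $W$-compatibility throughout. First, I would establish the small-torus analogue
\[
\pr^*(\cQ_{Q^\vee}^*) = (\cQ_{W_\aff}^*)^W, \qquad \pr^*(\bfD_{W_\aff/W}^*) = (\bfD_{W_\aff}^*)^W,
\]
following word-for-word the argument of \cite[Lemma 11.7]{CZZ2}, which uses only the semidirect decomposition $W_\aff = W \ltimes Q^\vee$. The upshot is that $W$-invariance of $f \in \cQ_{W_\aff}^*$ amounts to the equality $f(\eta_{t_\la w}) = f(\eta_{t_\la})$ for every $\la \in Q^\vee$ and $w \in W$, which is the condition already used implicitly in the preceding paragraph.

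Next I would verify that $\pr^*(\frakX_{I_w}^*) = X_{I_w}^*$ for each $w \in W_\aff^-$, by pairing both sides with the $\bfD_{W_\aff}$-basis $\{X_{I_{w'}}\}_{w' \in W_\aff}$. For $w' \in W_\aff^-$, $W$-compatibility gives $\pr(X_{I_{w'}}) = \frakX_{I_{w'}}$, which produces the Kronecker $\delta_{w,w'}$. For $w' = w'' v$ with $w'' \in W_\aff^-$ and $v \in W \setminus \{e\}$, the same compatibility allows a factorization $X_{I_{w'}} = z X_j$ with $z \in \cQ_{W_\aff}$ and $j \in I$, so Lemma \ref{lem:przero} gives $\pr(X_{I_{w'}}) = 0$ and the pairing vanishes. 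In both cases this matches the value of $X_{I_w}^*$ on $X_{I_{w'}}$.

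The remaining ingredient is that $\{\frakX_{I_w}^*\}_{w \in W_\aff^-}$ is already a $\cQ$-basis of $\cQ_{Q^\vee}^*$ and an $S$-basis of $\bfD_{W_\aff/W}^*$. The second is the previous lemma applied dually. For the first, I would compare the $S$-basis $\{\frakX_{I_w}\}_{w \in W_\aff^-}$ of $\bfD_{W_\aff/W}$ with the $\cQ$-basis $\{\eta_{t_\la}\}_{\la \in Q^\vee}$ of $\cQ_{Q^\vee}$ via the length-based bijection $W_\aff^- \to Q^\vee$; the change of basis is triangular with invertible diagonal entries, so $\{\frakX_{I_w}\}_{w \in W_\aff^-}$ is also a $\cQ$-basis of $\cQ_{Q^\vee}$ and the dual statement follows. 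Combined with the first two steps and the identification of $\pr^*$ with the $W$-invariants, this gives both claimed basis statements; linear independence of the $X_{I_w}^*$ is automatic since they sit inside the larger $\cQ$-basis $\{X_{I_w}^*\}_{w \in W_\aff}$ of $\cQ_{W_\aff}^*$.

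The single non-cosmetic point needing care is the vanishing $\pr(X_{I_{w'}}) = 0$ for $w' \notin W_\aff^-$: this is precisely where $W$-compatibility is essential, and it is what forces $\pr^*(\frakX_{I_w}^*)$ and $X_{I_w}^*$ to agree off the diagonal. Every other step is a straightforward small-torus transcription of the established big-torus statements.
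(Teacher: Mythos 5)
Your proposal is correct and follows essentially the route the paper intends when it says ``similar as Lemma \ref{lem:bigWinv}'': identify $(\cQ_{W_\aff}^*)^W$ and $(\bfD_{W_\aff}^*)^W$ with the images of $\pr^*$, and then use $W$-compatibility together with Lemma \ref{lem:przero} to see that $\pr^*$ carries the dual basis $\frakX_{I_w}^*$ to $X_{I_w}^*$. Your pairing computation for $\pr^*(\frakX_{I_w}^*)=X_{I_w}^*$ is just a repackaging of the paper's Lemma \ref{lem:frakX*} (the explicit coefficient formula $\frakX_{I_u}^*=\prod_\la b_{t_\la w,I_u}f_{t_\la}$), so the difference is cosmetic rather than a genuinely different argument.
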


\begin{lemma}\label{lem:frakX*}Assume that $\{I_w,w\in W_\aff\}$ is $W$-compatible. For any $w\in W, u\in W_\aff^-$, we have
\[
\mathfrak{X}_{I_u}^*=\prod_{\la\in Q^\vee}b_{t_\la w, I_{u}}f_{t_\la}\in \cQ_{Q^\vee}^*. 
\]
\end{lemma}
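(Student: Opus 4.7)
The plan is to read off the coefficients $c_\la$ in the expansion $\frakX_{I_u}^* = \prod_{\la \in Q^\vee} c_\la f_{t_\la} \in \cQ_{Q^\vee}^*$ by directly evaluating $c_\la = \frakX_{I_u}^*(\eta_{t_\la})$. The key idea is to use the $X_{I_v}$-expansion of $\eta_{t_\la w}$ in $\bfD_{W_\aff}$ and push it through $\pr$; $W$-compatibility is exactly what ensures that the push-forward picks out the right terms.

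Concretely, I would start from the identity
$$\eta_{t_\la w} = \sum_{v \le t_\la w} b_{t_\la w, I_v}\, X_{I_v}$$
in $\bfD_{W_\aff}$ and apply $\pr$ to both sides. The left-hand side becomes $\eta_{t_\la}$ by the very definition of $\pr$. For the right-hand side, $W$-compatibility of $\{I_w\}_{w \in W_\aff}$ factors $X_{I_v} = X_{I_{v_1}} X_{I_{v_2}}$ whenever $v = v_1 v_2$ with $v_1 \in W_\aff^-$ and $v_2 \in W$. Lemma \ref{lem:przero} then annihilates every summand with $v_2 \ne e$, leaving
$$\eta_{t_\la} = \sum_{\substack{u' \in W_\aff^-\\ u' \le t_\la w}} b_{t_\la w, I_{u'}}\, \frakX_{I_{u'}}.$$
Pairing both sides with $\frakX_{I_u}^*$ and using $\frakX_{I_u}^*(\frakX_{I_{u'}}) = \de_{u,u'}$ yields $c_\la = b_{t_\la w, I_u}$ (with the convention $b_{t_\la w, I_u} = 0$ when $u \not\le t_\la w$), which is precisely the claimed formula.

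The only subtle point is that the right-hand side of the formula appears to depend on the choice of $w \in W$, while the left-hand side $\frakX_{I_u}^*$ does not. The resolution is the $W$-compatibility consequence $b_{u' v, I_u} = b_{u', I_u}$ for $u' \in W_\aff^-,\, v \in W$ recorded just before the lemma: writing $t_\la = u_\la s_\la$ with $u_\la \in W_\aff^-$ and $s_\la \in W$, we have $t_\la w = u_\la (s_\la w)$ with $s_\la w \in W$, and hence $b_{t_\la w, I_u} = b_{u_\la, I_u}$ is indeed independent of $w$.

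I do not anticipate a serious obstacle; the content is essentially bookkeeping that repackages the known $W$-compatibility identity into the dual basis expansion on $\cQ_{Q^\vee}^*$. An equivalent route, which I would use as a sanity check, is to apply $\pr^*$ to the proposed formula and match against the explicit expansion $X_{I_u}^* = \prod_{v \ge u} b_{v, I_u} f_v$ recalled earlier, via $\pr^*(f_{t_\la}) = \sum_{v \in W} f_{t_\la v}$; this reduces the lemma to the same $W$-compatibility identity $b_{t_\la v, I_u} = b_{u_\la, I_u}$ for all $v \in W$.
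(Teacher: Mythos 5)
Your proposal is correct and follows essentially the same route as the paper: expand $\eta_{t_\la w}$ in the $X_{I_v}$-basis, apply $\pr$ and use Lemma \ref{lem:przero} together with $W$-compatibility to kill all terms with nontrivial $W$-part, then read off the coefficients of $\frakX_{I_u}^*$ against the resulting expansion $\eta_{t_\la}=\sum_{u\in W_\aff^-}b_{t_\la w,I_u}\frakX_{I_u}$. Your extra remark on independence of $w$ is a harmless addition (it also follows simply because the argument works for every fixed $w$ and the left-hand side does not depend on $w$).
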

\begin{proof}
For any $\la\in Q^\vee$, write
\[\eta_{t_\la w}=\sum_{u\in W_\aff^-, v\in W}b_{t_\la w,I_{u}\cup I_v}X_{I_{u}\cup I_v}.\]
By Lemma \ref{lem:przero},  we have 
\[
\eta_{t_\la}=\pr(\eta_{t_\la w})=\sum_{u\in W_\aff^-, v\in W}b_{t_\la w, I_{u}\cup I_v}\pr(X_{I_{u}\cup I_v})=\sum_{u\in W_\aff^-}b_{t_{\la}w, I_{u}}\pr(X_{I_u})=\sum_{u\in W_\aff^-}b_{t_\la w, I_{u}} \frakX_{I_u}. 
\]
Therefore, 
\[
\frakX_{I_u}^*=\prod_{\la\in Q^\vee}b_{t_\la w, I_{u}}f_{t_\la}\in \cQ_{Q^\vee}^*. 
\]
\end{proof}
This lemma implies that we have $\pr^*(\frakX_{I_u}^*)=X_{I_u}^*$, $u\in W_\aff^-$.

\subsection{}
There is a $\bullet$-action of $\cQ_{W_\aff}$ on $\cQ_{W_\aff}^*$, defined similar as the big torus case.

\begin{lemma} \label{lem:smalltorusaction}The $\bullet$-action of $\cQ_{W_\aff}$ on $\cQ_{W_\aff}^*$restricts to an action of $\bfD_{W_\aff}$ on $\bfD_{W_\aff}^*$.
\end{lemma}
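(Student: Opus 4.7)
The plan is to exploit the intrinsic characterization \eqref{eq:smalltorusmap}, which identifies $\bfD_{W_\aff}^*$ with the subset of those $f \in \cQ_{W_\aff}^*$ satisfying $f(\bfD_{W_\aff}) \subset S$. With this in hand, showing that $z \bullet f \in \bfD_{W_\aff}^*$ for $z \in \bfD_{W_\aff}$ and $f \in \bfD_{W_\aff}^*$ reduces to verifying that $(z \bullet f)(z') \in S$ for every $z' \in \bfD_{W_\aff}$.

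First I would unwind the definition of the $\bullet$-action to write $(z \bullet f)(z') = f(z'z)$. Next I would invoke the fact that $\bfD_{W_\aff}$ is by construction a subalgebra of $\cQ_{W_\aff}$, so the product $z'z$ again lies in $\bfD_{W_\aff}$. Finally, since $f \in \bfD_{W_\aff}^*$, the value $f(z'z)$ lies in $S$, and one concludes $z \bullet f \in \bfD_{W_\aff}^*$ via \eqref{eq:smalltorusmap}.

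In short, the proof is essentially immediate once \eqref{eq:smalltorusmap} is available: the only substantive ingredients are (i) closure of $\bfD_{W_\aff}$ under multiplication in $\cQ_{W_\aff}$, and (ii) the characterization of $\bfD_{W_\aff}^*$ as the set of functionals whose restriction to $\bfD_{W_\aff}$ takes values in $S$. This parallels the analogous verification for the big torus (see the discussion preceding Proposition \ref{prop:bigtorusGKM}) and the finite-root-system case in \cite[\S10]{CZZ2}. The main conceptual point to highlight, rather than an obstacle, is that no GKM-type divisibility check is required here: one does not need to test on the basis $\{\eta_w\}$, because the right-multiplication-preservation of the subalgebra $\bfD_{W_\aff}$ is enough.
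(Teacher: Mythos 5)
Your proof is correct and follows essentially the same route as the paper: the paper also reduces the claim to the characterization \eqref{eq:smalltorusmap}, only phrasing the closure of $\bfD_{W_\aff}$ under right multiplication concretely by expanding $X_{I_v}X_i=\sum_u c_{I_v\cup s_i,I_u}X_{I_u}$ with coefficients in $S$ and testing on the basis elements $X_{I_w}^*$. Your version, arguing directly with arbitrary $z,z'\in\bfD_{W_\aff}$, is just a mild streamlining of the same idea.
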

\begin{proof}Since $\bfD_{W_\aff}$ is a $S$-module with basis $X_{I_u}, u\in W_\aff$, so for any $w,v\in W_\aff, i\in I_\aff$, we have $X_{I_v}X_i=\sum_{u}c_{I_{v}\cup s_i,I_ u}X_{I_u}$ with $c_{I_v\cup s_i,I_u}\in S$. We have 
\[(X_i\bullet X_{I_w}^*)(X_{I_v})=X_{I_w}^*(X_{I_v}X_i)=c_{I_v\cup s_i, I_w}\in S.\]
By \eqref{eq:smalltorusmap},   $X_i\bullet X_{I_w}^*\in \bfD_{W_\aff}^*$. 
\end{proof}

\begin{lemma}We have 
\[
\bfD_{W_\aff}^*\subset \{f\in \cQ_{W_\aff}^*|f(\eta_w)\in S, \text{ and } f(\eta_w-\eta_{s_\al w})\in x_\al S, \forall \al\in \Phi, w\in W_\aff\}.
\]
\end{lemma}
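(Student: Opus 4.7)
The plan is to mimic the first half of the proof of Proposition~\ref{prop:bigtorusGKM} (the easy inclusion), but to be careful that the divided-difference operators $X_\al$ one needs to apply live in $\bfD_{W_\aff}$ only for $\al\in\Phi$, not for the whole affine root system — this is exactly why the stated GKM condition is imposed over $\Phi$ rather than $\Phi_\aff$.

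First I would dispose of the simpler condition $f(\eta_w)\in S$. Since $\{X_{I_v}\mid v\in W_\aff\}$ is a left $S$-basis of $\bfD_{W_\aff}$, for any $w\in W_\aff$ we can write $\eta_w=\sum_{v\le w} b_{w,I_v}X_{I_v}$ with $b_{w,I_v}\in S$. Because $f\in\bfD_{W_\aff}^*=\Hom_S(\bfD_{W_\aff},S)$, we get $f(\eta_w)=\sum_{v\le w}b_{w,I_v}f(X_{I_v})\in S$.

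Next I would handle the divisibility condition, using the $\bullet$-action. The key observation is that for every $\al\in\Phi$ one has $X_\al\in\bfD_{W_\aff}$: writing $\al=w(\al_i)$ with $w\in W$ and $i\in I$, an elementary computation in $\cQ_{W_\aff}$ gives
\[
\eta_w X_i\eta_{w^{-1}}=\tfrac{1}{x_\al}\bigl(1-\eta_{ws_iw^{-1}}\bigr)=\tfrac{1}{x_\al}(1-\eta_{s_\al})=X_\al,
\]
and all factors on the left lie in $\bfD_{W_\aff}$ since $\eta_u=1-x_u X_u\cdots\in \bfD_{W_\aff}$ for $u\in W$ and $X_i\in\bfD_{W_\aff}$. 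By Lemma~\ref{lem:smalltorusaction} the $\bullet$-action of $\bfD_{W_\aff}$ preserves $\bfD_{W_\aff}^*$, so $X_\al\bullet f\in\bfD_{W_\aff}^*$ and hence $(X_\al\bullet f)(\eta_w)\in S$ by the first step.

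Finally I would unpack what this evaluation means. Using $\eta_w\cdot \tfrac{1}{x_\al}=\tfrac{1}{x_{w(\al)}}\eta_w$ in $\cQ_{W_\aff}$,
\[
\eta_w X_\al=\tfrac{1}{x_{w(\al)}}(\eta_w-\eta_{ws_\al})=\tfrac{1}{x_{w(\al)}}(\eta_w-\eta_{s_{w(\al)}w}),
\]
so $(X_\al\bullet f)(\eta_w)=f(\eta_w X_\al)=\frac{f(\eta_w-\eta_{s_{w(\al)}w})}{x_{w(\al)}}\in S$, i.e.\ $f(\eta_w-\eta_{s_{w(\al)}w})\in x_{w(\al)}S$. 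Reparametrizing by $\be:=w(\al)$ and noting that $W$ permutes $\Phi$, this gives the required condition $f(\eta_w-\eta_{s_\be w})\in x_\be S$ for all $\be\in\Phi$ and all $w\in W_\aff$. The main subtle point — and the reason this only yields one inclusion rather than equality as in Proposition~\ref{prop:bigtorusGKM} — is precisely that $X_\al$ for $\al\in\Phi_\aff\setminus\Phi$ need not lie in $\bfD_{W_\aff}$ (for instance $X_{\al_0}=\tfrac{1}{x_{-\theta}}(1-\eta_{s_0})$ is only in the big-torus algebra), so the small-torus divisibility is restricted to finite roots; I expect the reverse inclusion (proved later in the paper) to require an additional $W$-invariant argument rather than pure maximality.
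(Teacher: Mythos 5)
Your strategy is the right one and is essentially the paper's: act on $f$ by suitable elements of $\bfD_{W_\aff}$ via $\bullet$, use Lemma~\ref{lem:smalltorusaction} together with \eqref{eq:smalltorusmap} to see that the result still takes values in $S$, and read off divisibility. The first step ($f(\eta_w)\in S$) is fine, and producing $X_\al=\eta_vX_i\eta_{v^{-1}}\in\bfD_{W_\aff}$ for \emph{every} $\al\in\Phi$ is a genuine refinement over applying only the simple $X_i$, $i\in I$.

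There is, however, a gap in the last step: the identity $\eta_{ws_\al}=\eta_{s_{w(\al)}w}$ is false in $W_\aff$ once $w$ has a translation component. Indeed $ws_\al w^{-1}$ is the reflection attached to the affine root $w(\al)\in\Phi_\aff$, i.e.\ it equals $s_{\bar w(\al)}t_{k\bar w(\al)^\vee}$ for an integer $k$ determined by the translation part of $w$, not the finite reflection $s_{\bar w(\al)}$ (in $\hat A_1$, with $w=t_{\al^\vee}$ one has $ws_\al=s_\al t_{-\al^\vee}\neq s_\al t_{\al^\vee}=s_{\bar w(\al)}w$). So what your computation actually proves is $f(\eta_w-\eta_{ws_\al})\in x_{\bar w(\al)}S$, i.e.\ divisibility for left multiplication by the affine reflection $s_{\bar w(\al)+k\de}$ with that specific $k$; this only constrains differences of $f(\eta_u)$ within right cosets $wW$, whereas the stated condition compares $w$ with $s_\be w$, which in general lies in a different coset, so it cannot be reached by a reindexing. (The paper's own proof makes the same identification $a_{ws_i}=a_{s_{w(\al_i)}w}$, so you have reproduced its level of detail, but as a self-contained argument the step needs repair.) The fix stays within your framework: conjugate the generators $X_i$ for \emph{all} $i\in I_\aff$ (including $X_0$) by $\eta_v$ with $v\in W_\aff$, not just $v\in W$. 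Since every real affine root is $W_\aff$-conjugate to a simple one, this yields, up to the unit $\tfrac{x_{-\ga}}{x_\ga}\in S^\times$, an element $\tfrac{1}{x_\ga}\bigl(1-\eta_{s_{\ga+k\de}}\bigr)\in\bfD_{W_\aff}$ for every real affine root $\ga+k\de$; applying the one attached to $w^{-1}(\be)\in\Phi_\aff$ (affine action) and evaluating at $\eta_w$ gives exactly $f(\eta_w-\eta_{s_\be w})\in x_\be S$, since $w\,(w^{-1}s_\be w)=s_\be w$ and $w(x_{\bar w^{-1}(\be)})=x_\be$ in $S$. Alternatively, one can supplement your statement with the translation divisibility $f(\eta_u-\eta_{t_{\be^\vee}u})\in x_\be S$ furnished by the elements $Z_\be\in\bfD_{W_\aff}$ of the next section, which bridges the discrepancy $ws_\al=s_\be t_{k\be^\vee}w$.
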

One of the  main results of this paper is to study how different  the two sets are, that is, to derive the small torus GKM condition. 
\begin{proof}Since $\eta_w\in \bfD_{W_\aff}$, then it follows from \eqref{eq:smalltorusmap} that $f(\eta_w)\in S$. Let $i\in I$ and  $f=\prod_{w\in W_\aff}a_wf_w\in \bfD^*_{W_\aff}$ with $a_w=f(\eta_w)\in S.$ We have
\[
X_i\bullet f=\frac{1}{x_i}(1-\eta_i)\bullet \prod_wa_wf_w=\prod_w\frac{a_w}{w(x_i)}f_w-\prod_w\frac{a_w}{ws_i(x_i)}f_{ws_i}=\prod_w\frac{a_w-a_{ws_i}}{w(x_i)}=\prod_w\frac{a_w-a_{s_{w(\al_i)}w}}{x_{w(\al_i)}}f_w.
\]
By Lemma \ref{lem:smalltorusaction}, $X_i\bullet f\in \bfD_{W_\aff}^*$, so $f(\eta_w-\eta_{s_\be w})=\frac{a_w-a_{s_\be w}}{x_\be}\in S$ for any $\be\in \Phi.$
\end{proof}

\subsection{}
We can similarly define the $\odot$ action
\[
a\eta_w\odot bf_v=aw(b)f_{wv}, ~w,v\in W_\aff, a,b\in \cQ. 
\]
It is easy to see that the $\odot$ and the $\bullet$ actions commute with each other. 
\begin{lemma}For any $\hat z\in \hatQ_{W_\aff}, \hat f\in \hatQ_{W_\aff}^*$, we have 
\[
\fp(\hat z)\odot \fq(\hat f)=\fq(\hat z\odot \hat f).
\]
In particular, the $\odot$-action of $\cQ_{W_\aff}$ on $\cQ_{W_\aff}^*$ induces an action of $\bfD_{W_\aff}$ on $\bfD_{W_\aff}^*$.
\end{lemma}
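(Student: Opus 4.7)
The plan is to verify the first identity by a direct computation on basis elements, and then deduce the second statement (preservation of the Demazure subalgebras) by a lifting argument.

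For the first identity, I would reduce, by $R$-linearity extended to the formal infinite sums allowed in the $\hat f_w$-basis, to the case $\hat z = a\eta_v$ with $a\in \hatQ$, $v\in W_\aff$, and $\hat f$ a single term $b\hat f_w$. Using the defining formula $a\eta_v\odot b\hat f_w = av(b)\hat f_{vw}$, both sides of the claimed identity unwind to a comparison of $\fp(av(b))f_{vw}$ with $\fp(a)\,v(\fp(b))f_{vw}$, so everything reduces to the $W_\aff$-equivariance of the projection $\fp:\hatQ\to \cQ$. To establish this equivariance, I would observe that $W_\aff$ fixes $\de$: specialising the displayed action formula $s_{\al+k\de}(\mu+m\de) = \mu+m\de - \langle\mu,\al^\vee\rangle(\al+k\de)$ to $\mu=0$, $m=1$ yields $s_{\al+k\de}(\de) = \de$. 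Thus the quotient $T_\aff^* \to T^*$ by $\bbZ\de$ is $W_\aff$-equivariant, and this equivariance transports through the formal group algebra construction to $\fp:\hatS\to S$, and then through the localization at $\{x_\al : \al\in \Phi_\aff\}$ to $\fp:\hatQ\to \cQ$.

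For the second statement, I would lift both arguments. The projection $\fp$ sends $\hatD_{W_\aff}$ onto $\bfD_{W_\aff}$, because $\fp(\hat X_i) = X_i$ and $\fp:\hatS\to S$ is surjective. The map $\fq$ sends $\hatD_{W_\aff}^*$ onto $\bfD_{W_\aff}^*$, because $\fq(\hat X_{I_w}^*) = X_{I_w}^*$ and any $S$-linear combination of the $X_{I_w}^*$ lifts coefficient-by-coefficient to an $\hatS$-linear combination of the $\hat X_{I_w}^*$. Then, given $z\in \bfD_{W_\aff}$ and $f\in \bfD_{W_\aff}^*$, I pick lifts $\hat z\in \hatD_{W_\aff}$ and $\hat f\in \hatD_{W_\aff}^*$; the first identity yields $z\odot f = \fq(\hat z\odot \hat f)$, and since the $\odot$-action of $\hatD_{W_\aff}$ on $\hatD_{W_\aff}^*$ is already known in the big-torus case, $\hat z\odot \hat f\in \hatD_{W_\aff}^*$, whence $z\odot f\in \fq(\hatD_{W_\aff}^*)\subset \bfD_{W_\aff}^*$, as required.

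No serious obstacle is expected. The one substantive input is the $W_\aff$-equivariance of $\fp$, which is immediate from $W_\aff\cdot\de=\de$. The only care-point is that $\hat f$ need not be a finite sum, but since both $\odot$ and $\fq$ are defined coordinate-wise in the $\hat f_w$-basis, the first identity extends from finite to formal infinite sums without modification.
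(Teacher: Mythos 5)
Your proposal is correct and follows essentially the same route as the paper: verify the identity on terms $\hat a\eta_v\odot\hat b\hat f_w$ using the $W_\aff$-equivariance of $\fp$ (which the paper uses implicitly), then obtain the induced action on $\bfD_{W_\aff}^*$ by lifting through the surjections $\fp:\hatD_{W_\aff}\to\bfD_{W_\aff}$ and $\fq:\hatD_{W_\aff}^*\to\bfD_{W_\aff}^*$. Your extra remarks (equivariance from $W_\aff\cdot\de=\de$, the coordinate-wise extension to infinite sums, and the coefficient-by-coefficient lifting for $\fq$) simply make explicit details the paper leaves tacit.
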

\begin{proof}Write $\hat z=\hat a\eta_v, \hat f=\hat b\hat f_w, \hat a, \hat b\in \hatQ, w,v\in W_\aff$ and suppose $\fp(\hat a)=a, \fp(\hat b)=b$, then 
\[
\fp(\hat z)\odot \fq(\hat f)=a\eta_v\odot bf_w=av(b)f_{vw}=\fq(\hat a v(\hat b)\hat f_{vw})=\fq(\hat a\eta_v\odot \hat b\hat f_w)=\fq(\hat z\odot \hat f).
\]

For the second part, note that $\fp:\hatD_{W_\aff}\to \bfD_{W_\aff}$ and $\fq:\hatD_{W_\aff}^*\to \bfD_{W_\aff}^*$ are both surjective. Given $z\in \bfD_{W_\aff}$ and $f\in \bfD_{W_\aff}^*$, suppose $z=\fp(\hat z)$ and $f=\fq(\hat f)$ for some $\hat z\in \hatD_{W_\aff}$ and $\hat f\in \hatD_{W_\aff}^*$, then 
\[
z\odot f=\fp(\hat z)\odot \fq(\hat f)=\fq(\hat z\odot \hat f)\in \fq(\hatD_{W_\aff}^*)=\bfD_{W_\aff}^*.
\]
\end{proof}

\begin{remark}If $F=F_\fc$, then all results in \S\ref{sec:connective} holds for $X_w^*$ and the corresponding $Y_w^*$. 
\end{remark}

\section{The small-torus GKM condition}
In this section, we study the small-torus GKM condition on the equivariant oriented cohomology of the affine flag variety and of the affine Grassmannian. 
\subsection{}
For each $\al\in \Phi$, we define
\[
Z_{\al}=\frac{1}{x_{-\al}}(1-\eta_{t_{\al^\vee}})\in \cQ_{W_\aff}.
\]
\begin{lemma} For each $\al\in \Phi$, we have $Z_{\al}\in \bfD_{W_\aff}$.
\end{lemma}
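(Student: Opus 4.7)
My plan is to express $Z_\al$ directly as a sum of products of elements already known to lie in $\bfD_{W_\aff}$, using the identity $t_{\al^\vee} = s_\al\cdot s_{\al+\de}$ in $W_\aff$. This identity is immediate from the relation $s_{\al+\de} = s_\al t_{\al^\vee}$ recorded in the Notations: indeed $s_\al s_{\al+\de} = s_\al\cdot s_\al t_{\al^\vee} = t_{\al^\vee}$. Writing
\[
1 - \eta_{t_{\al^\vee}} = (1-\eta_{s_\al}) + \eta_{s_\al}(1 - \eta_{s_{\al+\de}}),
\]
multiplying by $\tfrac{1}{x_{-\al}}$, and moving the scalar past $\eta_{s_\al}$ via $\tfrac{1}{x_{-\al}}\eta_{s_\al} = \eta_{s_\al}\cdot\tfrac{1}{x_\al}$ (valid since $W_\aff$ acts on $S$ through its quotient $W$ and $s_\al$ exchanges $x_{\pm\al}$), one obtains
\[
Z_\al = X_{-\al} + \eta_{s_\al}\cdot T, \qquad T := \tfrac{1}{x_\al}(1 - \eta_{s_{\al+\de}}).
\]

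In this decomposition, the first summand $X_{-\al}$ lies in the finite Demazure subalgebra $\bfD_W\subset\bfD_{W_\aff}$ (since $-\al\in\Phi$), and $\eta_{s_\al}\in\bfD_W$ as well. So the proof reduces to showing $T\in\bfD_{W_\aff}$. The idea is to realize $T$ as a conjugate of a generator: pick $w\in W_\aff$ and $i\in I_\aff$ with $w(\al_i) = \al+\de$ in $T_\aff^*$ (such a pair exists because $W_\aff$ acts on the real affine roots with orbits represented by the simple affine roots). A direct computation in $\cQ_{W_\aff}$ yields
\[
w X_i w^{-1} = \tfrac{1}{w(x_{\al_i})}\bigl(1 - \eta_{s_{\al+\de}}\bigr)\ (i\neq 0), \qquad w X_0 w^{-1} = \tfrac{1}{w(x_{-\theta})}\bigl(1 - \eta_{s_{\al+\de}}\bigr).
\]
The key point is the small-torus identity $w(x_{\al_i}) = x_\al$ (resp.\ $w(x_{-\theta}) = x_\al$): since $W_\aff$ acts on $S$ through $W$ and $\frakp:T_\aff^*\to T^*$ is $W_\aff$-equivariant, one has $w(x_\mu) = x_{\bar w(\mu)}$ with $\bar w(\frakp(\al_i)) = \frakp(w(\al_i)) = \frakp(\al+\de) = \al$. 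Hence $T = wX_iw^{-1}\in\bfD_{W_\aff}$.

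The main obstacle, though ultimately routine, is the small-torus bookkeeping in this final step. In the big-torus algebra $\hatD_{W_\aff}$ one would see a genuine Demazure element $\tfrac{1}{x_{\al+\de}}(1-\eta_{s_{\al+\de}})$, but after passing to the small torus the character $\al+\de$ is collapsed to $\al$ by $\frakp$ and the denominator becomes $x_\al$. Verifying that conjugation is compatible with this collapse — in particular treating the $i=0$ case, where the paper writes $X_0$ using $x_{-\theta}$ rather than any $x_{\al_0}$ — is the only delicate check, and it is forced by the $W_\aff$-equivariance of $\frakp$.
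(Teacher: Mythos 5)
Your proof is correct, but it takes a visibly different route from the paper's. The paper reduces to the rank-one affine subsystem attached to $\al$ (simple roots $\al_1=\al$, $\al_0=-\al+\de$), writes $t_{\al^\vee}=s_0s_1$, and expands $\eta_{s_0}\eta_{s_1}$ in the $X$-basis to obtain the closed formula $Z_\al=X_0+X_1-x_{-\al}X_0X_1$; regrouped, this is $Z_\al=X_0+\eta_{s_0}X_1$, i.e.\ exactly your telescoping identity but performed with the factorization $t_{\al^\vee}=s_{-\al+\de}\,s_\al$ instead of your $t_{\al^\vee}=s_\al\, s_{\al+\de}$. You instead split off $X_{-\al}$ and handle the remaining factor $\tfrac{1}{x_\al}(1-\eta_{s_{\al+\de}})$ by conjugating a simple generator, using that every real affine root is $W_\aff$-conjugate to a simple one and that $\frakp$ is $W_\aff$-equivariant with $W_\aff$ acting on $T^*$ through $W$. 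What your version buys: it makes explicit the conjugacy input that the paper's terse reduction to the $\hat A_1$ case leaves implicit (for non-simple $\al$, neither $X_\al$ nor the affine factor is a generator of $\bfD_{W_\aff}$), and it needs no rank-one computation. What the paper's version buys: an explicit expression for $Z_\al$ as an $S$-combination of $X_0,X_1$, which is reused immediately afterwards in the examples and the appendix. Two points you should state explicitly, though both are routine: $\eta_w\in\bfD_{W_\aff}$ for every $w\in W_\aff$ (immediate from $\eta_j=1-x_{\frakp(\al_j)}X_j$, $j\in I_\aff$), which is what guarantees that $\eta_{s_\al}T$ and the conjugate $\eta_w X_i\eta_{w^{-1}}$ stay inside $\bfD_{W_\aff}$; and the inclusion $X_{-\al}\in\bfD_W$ for non-simple $\al$ is itself the same conjugation fact applied within the finite system, so it is proved by your final step rather than being an independent input.
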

\begin{proof} It suffices to   show that $Z_{\alpha}$ is contained in the subalgebra of $\bfD_{W_\aff}$ generated by $S$ and $X_\alpha$.  So we assume the root system is the affine root system of $\SL_2$ with simple roots $\al_1=\al, \al_0=-\al+\de$. Then $t_{\al^\vee}=s_0s_1$. We have $\eta_{s_1}=1-x_{\al}X_1, \eta_{s_0}=1-x_{-\al}X_0$, so  $\eta_{s_0s_1}=1-x_{-\al}X_0-x_{-\al}X_1+x_{-\al}^2X_0X_1$. Therefore,
\[
Z_\al=\frac{1}{x_{-\al}}(1-\eta_{s_0s_1})=X_0+X_1-x_{-\al}X_{0}X_1\in \bfD_{W_\aff}.
\] 
\end{proof}

\begin{example}Suppose the root system is $\hat A_1$  with two simple roots $\al_1=\al, \al_0=-\al+\de$. 
\begin{enumerate}
\item If $F=F_\fc$ with $\fc=0$,  then we have $Z_{\al}=X_{0}+X_{1}+\al X_{0}X_{1}$. 
\item If $F=F_\fc$ with $\fc=1$,  then we have $Z_\al=X_0+X_1+(e^\al-1)X_{0}X_1$. 
\end{enumerate}
\end{example}

Since $\bfD_{W_\aff}$ acts on $\bfD^*_{W_\aff}$, so we know that $Z_\alpha$ acts on $\bfD^*_{W_\aff}$. Note that 
\[
Z_\al^k=\frac{1}{x_\al^k}(1-\eta_{t_{\al^\vee}})^k. 
\]

\subsection{}
We are now ready to prove the first main result of this paper. 
\begin{theorem}\label{thm:GKM}
\begin{enumerate}
\item The subset $\bfD_{W_\aff}^*\subset \cQ_{W_\aff}^*$ consists of elements  satisfying the following small-torus GKM condition: 
\[f\left((1-\eta_{t_{\al^\vee}})^d\eta_w\right)\in x_\al^d S, \text{ and } f\left((1-\eta_{t_{\al^\vee}})^{d-1}(1-\eta_{s_\al})\eta_w\right)\in x_\al^d S, ~\forall \al\in \Phi, w\in W_\aff, d\ge 1.\]
\item The subset  $(\bfD_{W_\aff}^*)^W\subset (\cQ_{W_\aff}^*)^W$ consists of elements   satisfying the following small-torus Grassmannian  condition: 
\[
f\left((1-\eta_{t_{\al^\vee}})^d\eta_w\right)\in x_\al^d S, ~ \forall \al\in \Phi, w\in W_\aff,  d\ge 1. 
\]
\end{enumerate}
\end{theorem}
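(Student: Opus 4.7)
The plan is to prove (1) first and then derive (2) as a consequence.

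For the forward inclusion in (1), the key observation is that both $Z_\al^d\eta_w$ and $Z_\al^{d-1}X_\al\eta_w$ belong to $\bfD_{W_\aff}$ (the former by the preceding lemma; the latter because $X_\al\in\bfD_{W_\aff}$ for $\al\in\Phi$). Using the $\cQ$-linear extension of $f\in\bfD_{W_\aff}^*$, one obtains
\[
f(Z_\al^d\eta_w)=\frac{1}{x_{-\al}^d}\,f\bigl((1-\eta_{t_{\al^\vee}})^d\eta_w\bigr)\in S,
\]
and, since $x_{-\al}$ and $x_\al$ generate the same ideal in $S$, this forces $f\bigl((1-\eta_{t_{\al^\vee}})^d\eta_w\bigr)\in x_\al^d S$; the second condition is derived in the same way from $Z_\al^{d-1}X_\al\eta_w\in\bfD_{W_\aff}$.

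For the reverse inclusion in (1), let $\calZ\subseteq\cQ_{W_\aff}^*$ denote the set of $f$ satisfying the two displayed divisibility conditions together with the implicit requirement $f(\eta_w)\in S$; thus $\calZ\subseteq S_{W_\aff}^*:=\Hom(W_\aff,S)$. The strategy parallels Proposition \ref{prop:bigtorusGKM}: first I would show that $\bfD_{W_\aff}^*$ is the maximal $\bfD_{W_\aff}$-submodule of $S_{W_\aff}^*$ under the $\bullet$-action (for any submodule $M$ and any $f\in M$, evaluating $(X_{I_w}\bullet f)(\eta_e)=f(X_{I_w})$ and using \eqref{eq:smalltorusmap} forces $M\subseteq\bfD_{W_\aff}^*$), and then verify that $\calZ$ is itself a $\bfD_{W_\aff}$-submodule. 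Closure of $\calZ$ under scalar multiplication by $S$ and under $\eta_v,\;v\in W_\aff,$ is a direct computation using that translations act trivially on $S$. The substantive step is closure under $X_i$, $i\in I_\aff$; one applies the action formula $X_{\al_i}\bullet\sum_w c_w f_w=\sum_w\frac{c_w-c_{s_{w(\al_i)}w}}{x_{w(\al_i)}}f_w$ combined with the commutation relation
\[
\eta_{s_\be}(1-\eta_{t_{\al^\vee}})^d=(1-\eta_{t_{s_\be(\al^\vee)}})^d\,\eta_{s_\be},
\]
which rewrites the required divisibility in terms of the GKM conditions assumed for $f$. The affine case $i=0$ is reduced to finite roots and translation elements via an identity expressing $X_0$ as a linear combination of $X_\theta$ and $Z_{-\theta}\eta_{s_\theta}$ with coefficients in $S$.

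The main obstacle is the subcase where $w(\al_i)$ is proportional to the root $\al$ in the GKM condition being checked: the denominator $x_{w(\al_i)}$ then absorbs one power of $x_\al$, and one must extract divisibility by $x_\al^{d+1}$ from hypotheses stated at level $d$. The decisive tool is the telescoping identity
\[
\Delta^{d-1}C_m-\Delta^{d-1}C_{m-1}=-\Delta^d C_{m-1},
\]
for the finite-difference operator $\Delta$ in the translation parameter along $\al^\vee$ (with $C_k=f(\eta_{t_{k\al^\vee}w'})$); the right-hand side lies in $x_\al^d S$ by the level-$d$ GKM condition, and iteration supplies the missing factor of $x_\al$.

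Part (2) is reduced to (1): for $f\in(\cQ_{W_\aff}^*)^W$ satisfying the first GKM condition, I plan to show the second is automatic. Writing $w=t_\la v$ with $v\in W$, $W$-invariance gives $c_{t_\mu v'}=c_{t_\mu}$ for all $v'\in W$, and expanding collapses the left-hand side of the second condition at level $d$ to $\Delta^{d-1}C_0-\Delta^{d-1}C_{-n}$ with $C_k=c_{t_{\la+k\al^\vee}}$ and $n=\langle\la,\al\rangle$; telescoping $|n|$ applications of the identity above (with a sign depending on the sign of $n$) and invoking the first GKM condition at level $d$ at each step places this difference in $x_\al^d S$, finishing the proof.
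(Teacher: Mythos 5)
Your forward inclusion is, up to phrasing, the paper's own argument: evaluating $f$ on $Z_\al^d\eta_w$ and $Z_\al^{d-1}X_\al\eta_w\in\bfD_{W_\aff}$ and invoking \eqref{eq:smalltorusmap} is the same mechanism as the paper's application of $Z_\al^d\bullet f$ and $Z_\al^{d-1}X_\al\bullet f$, resting on $Z_\al\in\bfD_{W_\aff}$ and on $x_{-\al}/x_\al$ being a unit of $S$. Where you genuinely diverge is the reverse inclusion of (1): the paper outsources it to \cite{LSS10}, whose argument is a leading-term induction on the support of $f$ (the GKM conditions force the value at a Bruhat-minimal element $w$ of the support to be divisible by $b_{w,I_w}$, so one may subtract an $S$-multiple of $X_{I_w}^*$ and iterate), whereas you transplant the paper's big-torus Proposition~\ref{prop:bigtorusGKM}: $\bfD_{W_\aff}^*$ is the maximal $\bullet$-stable submodule of $\Hom(W_\aff,S)$ (this part is airtight, by the evaluation-at-$\eta_e$ trick together with \eqref{eq:smalltorusmap} and the $S$-basis $X_{I_w}$), and the GKM set $\calZ$ is a $\bfD_{W_\aff}$-submodule. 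Your route buys a self-contained proof that reuses the operators $Z_\al$ and parallels Lemma~\ref{lem:bigtorus}/Proposition~\ref{prop:bigtorusGKM}; its cost is the case-by-case verification that $\calZ$ is closed under $X_i\bullet$ for all $i\in I_\aff$ (and, through your decomposition of $X_0$, also under $Z_{\pm\theta}\bullet$ and $\eta_{s_\theta}\bullet$), which you only sketch. I checked that these closures do hold, so the plan goes through. Your part (2) reduction --- that for $W$-invariant $f$ the $s_\al$-condition follows from the translation condition by telescoping $\Delta^{d-1}C_0-\Delta^{d-1}C_{-n}$ into $d$-th differences --- is exactly the step of \cite{LSS10} that the paper cites, and it is correct (note the needed $W$-invariance $c_{t_\la v}=c_{t_\la}$ is precisely the description of $(\cQ_{W_\aff}^*)^W$ given in the paper).

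Two points in the sketch are mislocated rather than wrong. In part (1) the ``main obstacle'' does not require extracting divisibility by $x_\al^{d+1}$ from level-$d$ hypotheses: since $f\in\calZ$ satisfies the conditions at \emph{every} level, in the proportional case $w(\al_i)=\pm\al$ the translation condition for $X_{\al_i}\bullet f$ at level $d$ is literally the second condition for $f$ at level $d+1$ divided by $x_{\pm\al}$, and the second condition follows from the identity $d_{t_{k\al^\vee}w}-d_{t_{k\al^\vee}s_\al w}=\ka_{w(\al_i)}\bigl(c_{t_{k\al^\vee}w}-c_{t_{k\al^\vee}s_\al w}\bigr)$ with $\ka_{w(\al_i)}\in S$; your telescoping identity is genuinely needed only in part (2), where only the first condition is assumed. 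Also, closure of $\calZ$ under $S\bullet$ is not purely ``translations act trivially'': for the second condition one needs $p-s_\al(p)\in x_\al S$ together with the first condition at level $d-1$, and in the non-proportional case of the second condition one must first clear the two distinct denominators $x_{w(\al_i)}$ and $x_{s_\al w(\al_i)}$ and use their coprimality with $x_\al$. These are expansions the write-up would need, not flaws in the approach.
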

Our proof follows similarly as that of \cite[Theorem 4.3]{LSS10}. The key improvement is that we don't need to prove Propositions 4.4 and 4.5 of loc.it., since we can use the operators $Z_\alpha$. However, for the convenience of the readers, we include an appendix, which gives all coefficients of $b_{w,I_v}$ in the $\hat A_1$ case. They can be used to show that $X_{I_w}^*$ satisfy the small torus GKM condition. 
\begin{proof}
(1). We  prove that elements of $\bfD^*_{W_\aff}$ satisfy the small-torus GKM condition.
Let $f=\prod_wc_wf_w\in \bfD^*_{W_\aff}$, we have
\[
Z_\alpha\bullet \prod_wc_wf_w=\prod_w(\frac{c_w}{w(x_{-\al})}f_w-\frac{c_w}{wt_{-\al^\vee}(x_{-\al})}f_{wt_{-\al^\vee}})=\prod_{w}\frac{c_w-c_{t_{w(\al^\vee)}w}}{x_{-w(\al)}}f_w\in \bfD_{W_\aff}^*.
\]
Note that $\frac{x_\al}{x_{-\al}}$ is invertible in $S$. Therefore, denoting $w(\al)=\be$, by \eqref{eq:smalltorusmap}, we have $f((1-\eta_{t_{\be^\vee}})\eta_w)\in x_\be S$ for any $\be\in \Phi$. 

Moreover, denote $d_w=\frac{c_w-c_{wt_{\al^\vee}}}{x_{-w(\al)}}$, then $d_{wt_{\al^\vee}}=\frac{c_{wt_{\al^\vee}}-c_{wt_{\al^\vee}t_{\al^\vee}}}{x_{-wt_{\al^\vee}(\al)}}=\frac{c_{wt_{\al^\vee}}-c_{wt_{2\al^\vee}}}{x_{-w(\al)}}$. Therefore, 
We have 
\begin{eqnarray*}Z_\al^2\bullet f&=&Z_\al\bullet Z_\al\bullet \prod_w c_w f_w
=\prod_w (\frac{d_w-d_{wt_{\al^\vee}}}{w(x_{-\al})})f_w=\prod_w \frac{c_w-2c_{wt_{\al^\vee} }+c_{wt_{2\al^\vee}}}{w(x_{-\al})^2}f_w\\
&=&\prod_w\frac{c_w-2c_{t_{w(\al^\vee)}w}+c_{t_{2w(\al^\vee)}w}}{x_{-w(\al)}^2}f_w=\prod_w\frac{1}{x^2_{-w(\al)}}f((1-\eta_{t_{w(\al^\vee)}})^2\eta_w)f_w.
\end{eqnarray*}
Denoting $w(\al)=\be$, we see that $f((1-\eta_{t_{\be^\vee}})^2\eta_w) \in x_\be^2 S$. Inductively, we see that  $f((1-\eta_{t_{\al^\vee}})^d\eta_w)\in x_\al^d S$ for all $d\ge 1$.

Similarly, if one applies  $Z_\al^{d-1}X_\al\in \bfD_{W_\aff}$ on $f$, which gives $Z_\al^{d-1}X_\al\bullet f\in \bfD_{W_\aff}^*$, one will see that $f$ satisfies the second condition. 

For the rest of the proof and for that of (2), it is identical to that of \cite[Theorem 4.3]{LSS10}, so it is skipped. 
\end{proof}
\begin{corollary}\label{cor:GrassGKM}The subset $\bfD_{W_\aff/W}^*\subset \cQ_{Q^\vee}^*$ consists of elements satisfying the following small torus Grassmannian condition:
\[
f((1-\eta_{t_\al^\vee})^d\eta_{t_\la})\in x_\al^d S, ~\forall \al\in \Phi, d\ge 1, \la\in Q^\vee. 
\]
\end{corollary}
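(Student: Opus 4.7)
The plan is to reduce the corollary to Theorem~\ref{thm:GKM}(2) by pushing the small-torus Grassmannian condition through the embedding $\pr^*\colon \cQ_{Q^\vee}^*\hookrightarrow \cQ_{W_\aff}^*$. The key preliminary I would establish is that $\pr^*$ restricts to a bijection $\bfD_{W_\aff/W}^*\isom (\bfD_{W_\aff}^*)^W$. This is essentially immediate from Lemma~\ref{lem:frakX*}, which gives $\pr^*(\frakX_{I_u}^*)=X_{I_u}^*$ for every $u\in W_\aff^-$, together with the fact that $\{\frakX_{I_u}^*\}_{u\in W_\aff^-}$ and $\{X_{I_u}^*\}_{u\in W_\aff^-}$ are the $S$-bases of $\bfD_{W_\aff/W}^*$ and $(\bfD_{W_\aff}^*)^W$, respectively, for a fixed $W$-compatible system of reduced sequences.

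Next I would compute the image under $\pr$ of the test elements appearing in Theorem~\ref{thm:GKM}(2). For $w=t_\la v$ with $\la\in Q^\vee$ and $v\in W$, using $\eta_{t_{k\al^\vee}}\eta_{t_\la v}=\eta_{t_{k\al^\vee+\la}v}$ together with $\pr(\eta_{t_{k\al^\vee+\la}v})=\eta_{t_{k\al^\vee+\la}}$ and expanding the binomial yield
\[
\pr\bigl((1-\eta_{t_{\al^\vee}})^d\eta_w\bigr)=(1-\eta_{t_{\al^\vee}})^d\eta_{t_\la}\in \cQ_{Q^\vee}.
\]
Consequently, for every $f\in \cQ_{Q^\vee}^*$,
\[
\pr^*(f)\bigl((1-\eta_{t_{\al^\vee}})^d\eta_w\bigr)=f\bigl((1-\eta_{t_{\al^\vee}})^d\eta_{t_\la}\bigr).
\]

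With these two ingredients in place, I would chain the equivalences: $f\in \bfD_{W_\aff/W}^*$ iff $\pr^*(f)\in (\bfD_{W_\aff}^*)^W$ iff $\pr^*(f)$ satisfies the small-torus Grassmannian condition of Theorem~\ref{thm:GKM}(2) for every $w\in W_\aff$, iff $f$ satisfies the condition of the corollary for every $\la\in Q^\vee$. No analytic input beyond Theorem~\ref{thm:GKM}(2) enters the argument. The only point that might seem to require care is the surjectivity $\pr^*(\bfD_{W_\aff/W}^*)=(\bfD_{W_\aff}^*)^W$, but the matching of dual bases handles this cleanly, so I do not anticipate a genuine obstacle.
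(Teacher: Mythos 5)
Your proposal is correct and follows essentially the same route as the paper: the paper's (one-line) proof also transfers the condition through $\pr^*$ using $\pr^*(f_{t_\la})=\sum_{v\in W}f_{t_\la v}$, the identification $\pr^*(\bfD_{W_\aff/W}^*)=(\bfD_{W_\aff}^*)^W$ (via $\pr^*(\frakX_{I_u}^*)=X_{I_u}^*$), and Theorem~\ref{thm:GKM}(2). You merely spell out the details (the computation $\pr\bigl((1-\eta_{t_{\al^\vee}})^d\eta_{t_\la v}\bigr)=(1-\eta_{t_{\al^\vee}})^d\eta_{t_\la}$ and the dual-basis matching) that the paper leaves implicit.
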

\begin{proof}This follows from the identity $\pr^*(f_{t_\la})=\sum_{v\in W}f_{t_\la v}$. 
\end{proof}

\section{The Peterson subalgebra}

In this section, we embed $\bfD_{W_\aff/W}$ into $\bfD_{W_\aff}$ and show that it coincides with  the centralizer of $S$ in $\bfD_{W_\aff}$. This is called the Peterson subalgebra, which gives the algebraic model for the equivariant oriented `homology' of  the affine Grassmannian.

\subsection{}
We have a canonical ring embedding (and also a $\cQ$-module embedding)
\begin{eqnarray*}
k:\cQ_{Q^\vee}\to \cQ_{W_\aff}, &\quad p\eta_{t_\la}\mapsto p\eta_{t_\la}, ~
\end{eqnarray*}
such that $\pr \circ k=\id_{\cQ_{W_\aff}}$. It is easy to see that the dual map $k^*:\cQ_{W_\aff}^*\to \cQ_{Q^\vee}^*$ satisfies
\begin{equation}\label{eq:k*}
 \quad k^*(f_{t_\la u})=\de_{u,e}f_{t_\la}, \quad u\in W. 
\end{equation}
 For K-theory, our map  $k$ is the map $k:K_T(\Gr_G)\to \bbK$ in \cite[\S5.2]{LSS10}, and $k^*$ is the wrong-way map $\varpi$ of \cite[\$4.4]{LSS10}.

The following lemma generalizes \cite{P97}, \cite[Theorem 4.4]{L06}  for the cohomology case, and \cite[Lemma 4.6]{LSS10} for the K-theory case. 
\begin{lemma}\label{lem:k*} The map $k^*$ induces a map $k^*:\bfD_{W_\aff}^*\to \bfD_{W_\aff/W}^*$. Consequently, the map $k$ induces a map $k:\bfD_{W_\aff/W}\to \bfD_{W_\aff}$. 
\end{lemma}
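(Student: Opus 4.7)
The plan is to prove Part (1) directly using the GKM characterization of $\bfD_{W_\aff/W}^*$, then to deduce Part (2) by a short duality argument.

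For Part (1), given $f \in \bfD_{W_\aff}^*$, I want to show $k^*(f) \in \bfD_{W_\aff/W}^*$. By Corollary \ref{cor:GrassGKM}, it is enough to check the small-torus Grassmannian condition, namely $k^*(f)((1-\eta_{t_{\al^\vee}})^d \eta_{t_\la}) \in x_\al^d S$ for all $\al \in \Phi$, $\la \in Q^\vee$, and $d \ge 1$. The key observation is that $k$ is a ring homomorphism acting as the identity on each basis element $\eta_{t_\mu}$, so
\[
k^*(f)\bigl((1-\eta_{t_{\al^\vee}})^d \eta_{t_\la}\bigr) = f\bigl(k\bigl((1-\eta_{t_{\al^\vee}})^d \eta_{t_\la}\bigr)\bigr) = f\bigl((1-\eta_{t_{\al^\vee}})^d \eta_{t_\la}\bigr),
\]
and the right-hand side lies in $x_\al^d S$ by Theorem \ref{thm:GKM}(1) applied at $w = t_\la \in W_\aff$.

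For Part (2), I use the elementary fact that since $\{X_{I_w}\}_{w \in W_\aff}$ is simultaneously a $\cQ$-basis of $\cQ_{W_\aff}$ and an $S$-basis of $\bfD_{W_\aff}$, an element $y \in \cQ_{W_\aff}$ belongs to $\bfD_{W_\aff}$ if and only if $X_{I_w}^*(y) \in S$ for every $w \in W_\aff$. Applying this to $y = k(z)$ with $z \in \bfD_{W_\aff/W}$ and using the adjunction $X_{I_w}^*(k(z)) = k^*(X_{I_w}^*)(z)$: since $X_{I_w}^* \in \bfD_{W_\aff}^*$, Part (1) gives $k^*(X_{I_w}^*) \in \bfD_{W_\aff/W}^* = \Hom_S(\bfD_{W_\aff/W}, S)$, so its value on $z \in \bfD_{W_\aff/W}$ lies in $S$. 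Hence $k(z) \in \bfD_{W_\aff}$, as required.

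The main potential obstacle is confirming the sufficiency direction of Corollary \ref{cor:GrassGKM}. This amounts to the identification $\pr^*(\bfD_{W_\aff/W}^*) = (\bfD_{W_\aff}^*)^W$ combined with Theorem \ref{thm:GKM}(2), mirroring Lemma \ref{lem:bigWinv} in the big-torus setting; once this is settled, both parts of the lemma follow by the short formal manipulations above.
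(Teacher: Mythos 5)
Your proof is correct and follows essentially the same route as the paper: verify the small-torus Grassmannian condition for $k^*(f)$ by combining Theorem \ref{thm:GKM}(1) at $w=t_\la$ with the fact that $k$ fixes elements supported on translations, then invoke Corollary \ref{cor:GrassGKM}. Your explicit duality argument for the second claim (testing $k(z)$ against the basis duals $X_{I_w}^*$ and using $X_{I_w}^*(k(z))=k^*(X_{I_w}^*)(z)$) is sound and simply fills in the step the paper leaves implicit after ``Consequently''.
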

\begin{proof}
Given $f\in \bfD_{W_\aff}^*$, then $f$ satisfies the small-torus GKM condition Theorem \ref{thm:GKM}, that is, 
\[
f((1-\eta_{t_{\al^\vee}})^d\eta_{t_\la u})\in x_\al^d S, \quad \forall u\in W, \la\in Q^\vee, \al\in \Phi, d\ge 1.
\]
Therefore, 
\[
k^*(f)((1-\eta_{t_{\al^\vee}})^d\eta_{t_\la })=f\left(k((1-\eta_{t_{\al^\vee}})^d\eta_{t_\la })\right)=f((1-\eta_{t_\al^\vee})^d\eta_{t_\la})\in x_\al^d S. 
\]
 Therefore, by Corollary \ref{cor:GrassGKM},   $k^*(f)\in \bfD_{W_\aff/W}^*$. 
\end{proof}

\begin{remark}It would be interesting to find a direct proof of the fact that $k$ maps $\bfD_{W_\aff/W}$ to $\bfD_{W_\aff}$. One possible choice is to find the small torus residue condition of $\bfD_{W_\aff} $ similar to the residue condition of \cite{GKV97} (see  \cite{ZZ17}).
\end{remark}

\begin{example}Note that this result is not true for the big torus case, that is, $ k(\hatD_{W_\aff/W})$  is not contained in $\hatD_{W_\aff}$. For example, in the  $\hat A_1$ case, we have
\[
\pr(X_0)=\pr(\frac{1}{x_{-\al+\de}}(1-\eta_{t_{\al^\vee}s_1}))=\frac{1}{x_{-\al+\delta}}(1-\eta_{t_{\al^\vee}})\in \hatD_{W_\aff/W},
\] and 
\[
k(\pr(X_0))=\frac{1}{x_{-\al+\de}}(1-\eta_{t_\al})=\frac{1}{x_{-\al+\de}}(1-\eta_{s_0s_1})\not\in \hatD_{W_\aff}. 
\]
\end{example}
\begin{lemma}\label{lem:frakX*k*} If $I_w,w\in W$ is $W$-compatible, then $k^*(X_{I_u}^*)=\frakX_{I_u}^*$ for any $u\in W_\aff^-$. 
\end{lemma}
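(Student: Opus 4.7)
The plan is to verify the identity by expanding both sides in their canonical dual bases and matching coefficients term-by-term using \eqref{eq:k*}. Both sides live in $\cQ_{Q^\vee}^*$ and have been described explicitly in earlier parts of \S3, so no new computation is really needed; the statement is essentially a bookkeeping consequence of the formulas already at our disposal.

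First I would write out
\[
X_{I_u}^* = \prod_{v \ge u} b_{v, I_u}\, f_v \in \bfD_{W_\aff}^* \subset \cQ_{W_\aff}^*,
\]
which was recorded just after Lemma \ref{lem:frakX*}, and then apply $k^*$ termwise. By \eqref{eq:k*}, we have $k^*(f_v) = 0$ unless $v \in Q^\vee \subset W_\aff$, in which case $v = t_\la$ and $k^*(f_{t_\la}) = f_{t_\la}$. Therefore
\[
k^*(X_{I_u}^*) \;=\; \prod_{\la \in Q^\vee,\; t_\la \ge u} b_{t_\la, I_u}\, f_{t_\la}.
\]

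To conclude, I would invoke Lemma \ref{lem:frakX*} in the special case $w = e$, which gives
\[
\frakX_{I_u}^* \;=\; \prod_{\la \in Q^\vee} b_{t_\la, I_u}\, f_{t_\la}.
\]
Since $b_{t_\la, I_u}$ is by definition the coefficient of $X_{I_u}$ in the expansion $\eta_{t_\la} = \sum_{v \le t_\la} b_{t_\la, I_v} X_{I_v}$, it vanishes unless $u \le t_\la$, so the two expressions for $k^*(X_{I_u}^*)$ and $\frakX_{I_u}^*$ agree. The only subtlety—really the only place the $W$-compatibility of $\{I_w\}$ is used—is that this hypothesis is what licenses the application of Lemma \ref{lem:frakX*} and guarantees that $\{X_{I_w}^*\}_{w \in W_\aff^-}$ and $\{\frakX_{I_w}^*\}_{w \in W_\aff^-}$ are honest dual bases related as claimed.
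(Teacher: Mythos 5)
Your proof is correct and follows essentially the same route as the paper: expand $X_{I_u}^*$ in the $f_v$-basis, apply $k^*$ termwise via \eqref{eq:k*} to keep only the $f_{t_\la}$-components, and identify the result with $\frakX_{I_u}^*$ by Lemma \ref{lem:frakX*} (the $w=e$ case). The remark that $b_{t_\la,I_u}$ vanishes unless $u\le t_\la$ is a harmless bookkeeping point that the paper leaves implicit.
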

\begin{proof}By \eqref{eq:k*} and Lemma \ref{lem:frakX*}, we have
\[
k^*(X_{I_u}^*)=k^*(\prod_{\la\in Q^\vee, w\in W}b_{t_\la w,I_u}f_{t_\la w})=\prod_{\la\in Q^\vee}b_{t_\la,I_u }f_{t_\la}=\frakX_{I_u}^*.
\]
\end{proof}

\subsection{}
Let $C_{\bfD_{W_\aff}}(S)$ be the centralizer of $S$ in $\bfD_{W_\aff}$.
Our second main result is the following, which generalizes \cite[Lemma 5.2]{LSS10} in the K-theory case and  \cite[\S9.3]{P97} in the cohomology case (proved in \cite[Theorem 6.2]{LS10}). 
\begin{theorem}\label{thm:Peterson}
We have $C_{\bfD_{W_\aff}}(S)=k(\cQ_{Q^\vee})\cap \bfD_{W_\aff}= k(\bfD_{W_\aff/W})$. 
\end{theorem}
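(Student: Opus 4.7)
The plan is to prove the two equalities in the chain $C_{\bfD_{W_\aff}}(S) = k(\cQ_{Q^\vee}) \cap \bfD_{W_\aff} = k(\bfD_{W_\aff/W})$ separately. The second equality is essentially formal given what is already in place: the inclusion $\supseteq$ is Lemma~\ref{lem:k*}, and for $\subseteq$, given $z \in k(\cQ_{Q^\vee}) \cap \bfD_{W_\aff}$ I write $z = k(y)$ with $y \in \cQ_{Q^\vee}$, apply $\pr$, and use $\pr \circ k = \id$ to obtain $y = \pr(z) \in \pr(\bfD_{W_\aff}) = \bfD_{W_\aff/W}$, hence $z \in k(\bfD_{W_\aff/W})$.

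For the first equality $C_{\bfD_{W_\aff}}(S) = k(\cQ_{Q^\vee}) \cap \bfD_{W_\aff}$, the decisive structural fact is that each translation $t_\la$, $\la \in Q^\vee$, fixes the small-torus character lattice $T^*$ pointwise: the action on $T^*_\aff = T^* \oplus \bbZ\de$ is $t_\la(\mu + k\de) = \mu + (k - \langle \mu, \la\rangle)\de$, so only the $\de$-component shifts. Consequently $t_\la$ acts trivially on the formal group algebra $S$, and for any $c \in \cQ$, $s \in S$ one computes $(c\eta_{t_\la})\,s = c\,t_\la(s)\eta_{t_\la} = cs\eta_{t_\la} = s\,(c\eta_{t_\la})$. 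This yields $k(\cQ_{Q^\vee}) \subseteq C_{\cQ_{W_\aff}}(S)$; intersecting with $\bfD_{W_\aff}$ gives the inclusion $\supseteq$.

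For the reverse inclusion, let $z = \sum_{w \in W_\aff} c_w\eta_w \in \bfD_{W_\aff}$ (a finite sum with $c_w \in \cQ$) centralize $S$. Since $\{\eta_w\}_{w \in W_\aff}$ is a $\cQ$-basis of $\cQ_{W_\aff}$, the identity $zs - sz = \sum_w c_w(w(s) - s)\eta_w = 0$ forces $c_w(w(s) - s) = 0$ in $\cQ$ for every $w$ and every $s \in S$. Writing $w = t_\la u$ with $\la \in Q^\vee$, $u \in W$, and using that $t_\la$ fixes $T^*$, the condition $w(x_\mu) = x_\mu$ for all $\mu \in T^*$ reduces to $u(\mu) = \mu$ for all $\mu$. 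Since $W$ acts faithfully on $T^*$ (as $G$ is semisimple) and the assignment $\mu \mapsto x_\mu$ is injective in the formal group algebra, this forces $u = e$. As $\cQ$ is a domain, $c_w = 0$ whenever $w \notin Q^\vee$, so $z \in k(\cQ_{Q^\vee})$, completing the proof.

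The main obstacle is conceptual rather than computational: one must recognize that the triviality of the $t_\la$-action on $T^*$ is precisely what makes the centralizer statement work, and that this is exactly the feature distinguishing the small torus from the big torus. The Example preceding the theorem shows the analog fails in the big-torus setting: on $T^*_\aff$ the translations shift the $\de$-coordinate nontrivially, so $\eta_{t_\la}$ no longer commutes with $\hatS$, and the inclusion $k(\hatD_{W_\aff/W}) \subseteq \hatD_{W_\aff}$ itself fails. Once the small-torus action is understood, the coefficient extraction from the $\cQ$-basis $\{\eta_w\}$ and the faithfulness argument finish matters routinely.
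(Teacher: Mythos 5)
Your proposal is correct and follows essentially the same route as the paper: triviality of the $t_\la$-action on $T^*$ gives one inclusion, extracting coefficients against the $\cQ$-basis $\{\eta_w\}$ gives the converse, and the second equality is Lemma~\ref{lem:k*} together with $\pr\circ k=\id$. The only cosmetic difference is that the paper concludes $w\in Q^\vee$ by evaluating at a single $W$-regular $\mu$, while you invoke faithfulness of the $W$-action on $T^*$ (plus injectivity of $\mu\mapsto x_\mu$), which is an equivalent step.
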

\begin{proof}
We look at the first identity. Since ${t_\la}(p)=p$ for any $ p\in S$, so it is clear that $\cQ_{Q^\vee}\cap \bfD_{W_\aff}\subset C_{\bfD_{W_\aff}}(S)$. Conversely, let $z=\sum_{w\in W_\aff}c_w\eta_w\in C_{\bfD_{W_\aff}}(S)$, then for any $\mu\in T^*$, we have 
\[
0=x_\mu z-zx_\mu=\sum_{w\in W_\aff}c_w(x_\mu-x_{w(\mu)})\eta_w. 
\]
Therefore, for any $c_w\neq 0$, we have $\mu=w(\mu)$ for all $\mu\in T^*$. we can take $\mu$ to be $W$-regular, which shows that $c_w\neq 0$ only when $w=t_\la$ for some $\la\in Q^\vee$. So $z\in k(\cQ_{Q^\vee})$. The first identity is proved. 

We now look at the second identity. It follows from Lemma \ref{lem:k*} that $k(\bfD_{W_\aff/W})\subset k(\cQ_{Q^\vee})\cap  \bfD_{W_\aff}$. For the other inclusion, note that  $\eta_{t_\la}\in \bfD_{W_\aff}$ is a $\cQ$-basis of $k(\cQ_{Q^\vee})$. Given any $z=\sum_{\la\in Q^\vee}p_\la \eta_{t_\la}\in k(\cQ_{Q^\vee})\cap \bfD_{W_\aff}, p_\la\in \cQ$, then $\pr(z)\in \pr(\bfD_{W_\aff})=\bfD_{W_\aff/W}$, and 
\[k\circ \pr(z)=k\circ \pr(\sum_\la p_\la \eta_{t_\la})=k(\sum_{\la }p_\la \eta_{t_\la})=\sum_\la  p_\la \eta_{t_\la}=z.\]
Therefore, $k(\cQ_{Q^\vee})\cap \bfD_{W_\aff}\subset k(\bfD_{W_\aff/W})$. The second identity is proved. 
\end{proof}

\begin{definition}We define the Peterson subalgebra to be  $\bfD_{Q^\vee}=k(\bfD_{W_\aff/W})$. 
\end{definition}
Let $I_w, w\in W_\aff$ be $W$-compatible. Since $\bfD_{W_\aff/W}$ is a free $S$-module with basis $\frakX_{I_w}, w\in W_\aff^-$, so $k(\frakX_{{I_w}})$ form a basis of $\bfD_{Q^\vee}$. This is the algebraic model for the oriented homology of the affine Grassmannian $\Gr_G$. 
The following result generalizes \cite[Theorem 5.3]{LSS10} in K-theory. 
\begin{theorem}The ring $\bfD_{Q^\vee}$ is a Hopf algebra, and the embedding $\bfD_{Q^\vee}\to \cQ_{Q^\vee}$ is an Hopf-algebra homomorphism.
\end{theorem}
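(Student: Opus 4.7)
The plan is to equip $\cQ_{Q^\vee}$ with the Hopf algebra structure coming from the abelian group $Q^\vee$ and then verify that $\bfD_{Q^\vee}$ is preserved by all of its structure maps. Since $t_\la$ acts trivially on $T^*$ (as observed in the proof of Theorem~\ref{thm:Peterson}), $\cQ_{Q^\vee}=\cQ\rtimes R[Q^\vee]$ is simply the group algebra $\cQ[Q^\vee]$, and I would give it the natural Hopf algebra structure
\[
\Delta(\eta_{t_\la})=\eta_{t_\la}\otimes\eta_{t_\la},\quad \epsilon(\eta_{t_\la})=1,\quad \mathcal{S}(\eta_{t_\la})=\eta_{t_{-\la}},
\]
extended by $\cQ$-linearity. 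The Hopf axioms are inherited from those of $R[Q^\vee]$, so what remains is to check that $\bfD_{Q^\vee}$ is preserved by each of $\Delta$, $\epsilon$, $\mathcal{S}$. Closure under $\mathcal{S}$ is direct: the inversion $\la\mapsto -\la$ on $Q^\vee$ extends to an algebra involution of $\cQ_{W_\aff}$ sending $\eta_{t_\la u}\mapsto \eta_{t_{-\la}u}$ for $u\in W$, and checking its effect on the generators $X_i, i\in I_\aff$ (with some care for $\al_0=-\theta+\de$) shows that $\bfD_{W_\aff}$ is preserved, hence so is the intersection $\bfD_{W_\aff}\cap k(\cQ_{Q^\vee})=\bfD_{Q^\vee}$ by Theorem~\ref{thm:Peterson}. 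Closure under $\epsilon$ is a routine check on the $S$-basis $\{k(\frakX_{I_w}):w\in W_\aff^-\}$ provided by Lemma~\ref{lem:frakX*k*}.

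The main step is closure under $\Delta$, and my approach is to dualize. Since the coproduct on $\cQ_{Q^\vee}$ is dual to the pointwise product $(fg)(\eta_{t_\la})=f(\eta_{t_\la})g(\eta_{t_\la})$ on $\cQ_{Q^\vee}^*$, it suffices to show that $\bfD^*_{W_\aff/W}\subset \cQ_{Q^\vee}^*$ is closed under pointwise multiplication. Setting $F(\la):=f(\eta_{t_\la})$, Corollary~\ref{cor:GrassGKM} characterizes $\bfD^*_{W_\aff/W}$ by the finite-difference divisibility
\[
(\nabla_{\al^\vee}^d F)(\la):=\sum_{i=0}^d\binom{d}{i}(-1)^i F(\la+i\al^\vee)\in x_\al^d S,\quad \forall \al\in\Phi,\ \la\in Q^\vee,\ d\ge 1,
\]
together with the analogous condition on $G(\la):=g(\eta_{t_\la})$. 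I would then invoke the discrete Leibniz identity
\[
\nabla_{\al^\vee}^d(FG)(\la)=\sum_{j=0}^d\binom{d}{j}(\nabla_{\al^\vee}^j F)(\la)\cdot(\nabla_{\al^\vee}^{d-j} G)(\la+j\al^\vee),
\]
whose summands each lie in $x_\al^j S\cdot x_\al^{d-j}S\subset x_\al^d S$, so that $FG$ satisfies the same GKM divisibility and $fg\in\bfD^*_{W_\aff/W}$. Dualizing back via the $S$-duality between $\bfD_{W_\aff/W}$ and $\bfD^*_{W_\aff/W}$ then yields $\Delta(\bfD_{Q^\vee})\subset \bfD_{Q^\vee}\otimes\bfD_{Q^\vee}$.

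The hard part will be the duality bookkeeping: $\bfD^*_{W_\aff/W}$ sits inside the completed dual $\cQ_{Q^\vee}^*$, where possibly infinite sums $\prod_\la c_\la f_{t_\la}$ occur, and one must verify carefully that the pairing identifies pointwise multiplication on the dual with the restriction of $\Delta$ from $\cQ_{Q^\vee}$. This argument will parallel the finite-type K-theory proof of \cite[Theorem 5.3]{LSS10}; the passage to a general formal group law enters only through the ground ring $S\subset\cQ$ and leaves the combinatorial discrete-Leibniz step intact.
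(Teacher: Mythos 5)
Your proposal is correct in substance, but it proves the one nontrivial point---closure of $\bfD_{Q^\vee}$ under the coproduct---by a genuinely different route than the paper. The paper's proof is very short: it takes $\triangle(\eta_w)=\eta_w\otimes\eta_w$ on $\cQ_{W_\aff}$, invokes \cite{CZZ1} for the fact that this restricts to a coproduct $\bfD_{W_\aff}\to\bfD_{W_\aff}\otimes_S\bfD_{W_\aff}$ on the small-torus FADA, and then restricts to $\bfD_{Q^\vee}=k(\cQ_{Q^\vee})\cap\bfD_{W_\aff}$ via Theorem~\ref{thm:Peterson}, declaring the counit and antipode checks routine. You instead work entirely inside $\bfD_{W_\aff/W}$ and dualize: the coefficient of $\frakX_{I_u}\otimes\frakX_{I_v}$ in $\triangle(z)$ is the value $(\frakX_{I_u}^*\cdot\frakX_{I_v}^*)(z)$ of the pointwise product of dual basis elements, and the small-torus Grassmannian condition of Corollary~\ref{cor:GrassGKM} is stable under pointwise products by the discrete Leibniz rule---exactly the mechanism of \cite[Theorem 5.3]{LSS10}. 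This trades the external citation of \cite{CZZ1} for the paper's own GKM theorem (Theorem~\ref{thm:GKM}), making the argument self-contained within this paper, at the cost of being longer; the paper's route is shorter but leans on the coproduct result for the full FADA. The bookkeeping you flag does go through: $\{\frakX_{I_w}\}_{w\in W_\aff^-}$ is simultaneously an $S$-basis of $\bfD_{W_\aff/W}$ and a $\cQ$-basis of $\cQ_{Q^\vee}$, any $z\in\bfD_{Q^\vee}$ is a finite $\cQ$-combination of the $\eta_{t_\la}$, so $\triangle(z)$ is a finite sum $\sum c_{u,v}\frakX_{I_u}\otimes\frakX_{I_v}$ with $c_{u,v}=(\frakX_{I_u}^*\cdot\frakX_{I_v}^*)(z)\in S$, which places it in $\bfD_{W_\aff/W}\otimes_S\bfD_{W_\aff/W}$ (note the Grassmannian condition must be read as including the integrality $f(\eta_{t_\la})\in S$, which your Leibniz argument does supply via the $j=0,d$ terms). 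Your antipode sketch also completes easily: the map $\eta_{t_\la u}\mapsto\eta_{t_{-\la}u}$ is an algebra automorphism of $\cQ_{W_\aff}$ precisely because translations act trivially on $S$, it fixes $X_i$ for $i\in I$, and for $\al_0$ one computes that $X_0=\frac{1}{x_{-\theta}}(1-\eta_{s_0})$ is sent to $\frac{1}{x_{-\theta}}(1-\eta_{s_\theta s_0 s_\theta})=\frac{x_\theta}{x_{-\theta}}\,\eta_{s_\theta}X_0\eta_{s_\theta}\in\bfD_{W_\aff}$, since $\frac{x_\theta}{x_{-\theta}}$ is a unit of $S$ and $\eta_{s_\theta}\in\bfD_{W_\aff}$; then Theorem~\ref{thm:Peterson} gives preservation of $\bfD_{Q^\vee}$, and the counit check is indeed immediate ($\epsilon(k(\frakX_{I_w}))=\de_{w,e}$, or observe that $\epsilon$ satisfies the condition of Corollary~\ref{cor:GrassGKM} trivially).
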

\begin{proof}The coproduct structure on $\cQ_{W_\aff}$ is defined as $\triangle: \cQ_{W_\aff}\to \cQ_{W_\aff}\otimes_{\cQ}\cQ_{W_\aff}, \eta_w\mapsto \eta_w\otimes \eta_w.$ It is easy to see that this induces a coproduct structure on $\cQ_{Q^\vee}$, and by  \cite{CZZ1}, it induces a coproduct structure on $\bfD_{W_\aff}$. Therefore, it induces a coproduct structure on  $\bfD_{Q^\vee}$. The product structure is induced by that of $\cQ_{Q^\vee}$, The antipode  is $\fraks:\cQ_{Q^\vee}\to \cQ_{Q^\vee}, \eta_{t_\la}\mapsto \eta_{t_{-\la}}$. It is then routine to check that $\bfD_{Q^\vee}$ is a Hopf algebra and the embedding  to $\cQ_{Q^\vee}$ is an embedding of Hopf algebras.
\end{proof}
\begin{remark}For K-theory, we know the Hecke algebra is contained $\bfD_{W_\aff}$. It is proved by Berenstein-Kazhdan \cite{BK19} that certain localization of the Hecke algbra is a Hopf algebra. It is not difficult to see that it is compatible with the Hopf algebra structure of $\bfD_{Q^\vee}$. 
\end{remark}

\subsection{}
The following theorem generalizes \cite[Theorem 5.4]{LSS10} in the K-theory case and \cite[Theorem 6.2]{LS10} in the cohomology case.  
\begin{theorem}\label{thm:KX} Assume $I_w,w\in W_\aff$ is $W$-compatible. If $u\in W_\aff^-$, then we have 
\[
k(\frakX_{I_u})=X_{I_u}+\sum_{v\in W_\aff\backslash W_\aff^-}c_{I_u,I_v}X_{I_v}, ~c_{I_u, I_v}\in S. 
\]
\end{theorem}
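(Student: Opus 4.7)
The plan is to compare the expansion of $k(\frakX_{I_u})$ in the $S$-basis $\{X_{I_v}\}_{v \in W_\aff}$ of $\bfD_{W_\aff}$ with its image under the projection $\pr$. The point is that $\pr$ annihilates every $X_{I_v}$ with $v \notin W_\aff^-$ (using $W$-compatibility), and satisfies $\pr \circ k = \id$, so the $\{X_{I_v}\}_{v \in W_\aff^-}$-part of $k(\frakX_{I_u})$ is forced to equal $X_{I_u}$.

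More precisely, first I would invoke Lemma \ref{lem:k*} (equivalently, the inclusion $k(\bfD_{W_\aff/W}) \subset \bfD_{W_\aff}$ of Theorem \ref{thm:Peterson}) to deduce that $k(\frakX_{I_u})$ lies in $\bfD_{W_\aff}$. Hence there is a unique expansion
\[
k(\frakX_{I_u}) = \sum_{v \in W_\aff} c_{I_u,I_v} X_{I_v}, \qquad c_{I_u,I_v}\in S.
\]
Now apply $\pr$ to both sides. Since $\pr \circ k = \id_{\cQ_{Q^\vee}}$, the left-hand side is just $\frakX_{I_u}$.

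Second, I would evaluate $\pr(X_{I_v})$. If $v \in W_\aff^-$ then $\pr(X_{I_v}) = \frakX_{I_v}$ by definition. Otherwise, write $v = v_1 v_2$ with $v_1 \in W_\aff^-$ and $e \neq v_2 \in W$. By the $W$-compatibility of $\{I_w\}$, we have $I_v = I_{v_1} \cup I_{v_2}$, hence $X_{I_v} = X_{I_{v_1}} X_{I_{v_2}}$, and the right factor ends in some $X_i$ with $i \in I$. Iterating Lemma \ref{lem:przero} (which says $\pr(z X_i) = 0$ for $z \in \cQ_{W_\aff}$, $i \in I$) yields $\pr(X_{I_v}) = 0$. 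Therefore
\[
\frakX_{I_u} = \sum_{v \in W_\aff^-} c_{I_u,I_v}\, \frakX_{I_v}.
\]

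Finally, since $\{\frakX_{I_v}\}_{v \in W_\aff^-}$ is an $S$-basis of $\bfD_{W_\aff/W}$, comparing coefficients forces $c_{I_u,I_u} = 1$ and $c_{I_u,I_v} = 0$ for all $v \in W_\aff^-$ with $v \neq u$. This is precisely the asserted expansion. I do not anticipate a real obstacle here: the only nontrivial ingredients are Lemma \ref{lem:przero}, the $W$-compatibility assumption (which guarantees the reduced word of $v$ factors correctly), and the fact that $k$ maps into $\bfD_{W_\aff}$, all of which are already established.
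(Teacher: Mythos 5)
Your proof is correct, but it extracts the coefficients differently from the paper. Both arguments start the same way: Lemma \ref{lem:k*} puts $k(\frakX_{I_u})$ inside $\bfD_{W_\aff}$, so it has a unique finite expansion $\sum_{v\in W_\aff} c_{I_u,I_v}X_{I_v}$ with $c_{I_u,I_v}\in S$. The paper then pairs with the dual basis: for $w\in W_\aff^-$ it computes $X_{I_w}^*(k(\frakX_{I_u}))=k^*(X_{I_w}^*)(\frakX_{I_u})=\frakX_{I_w}^*(\frakX_{I_u})=\de_{w,u}$, using Lemma \ref{lem:frakX*k*}. You instead apply $\pr$ to the expansion, using $\pr\circ k=\id$, the vanishing $\pr(X_{I_v})=0$ for $v\notin W_\aff^-$ (which follows, as you say, from $W$-compatibility, the fact that reduced words of elements of the parabolic subgroup $W$ use only letters in $I$, and Lemma \ref{lem:przero}), and the freeness of $\bfD_{W_\aff/W}$ on $\{\frakX_{I_w}\}_{w\in W_\aff^-}$ to compare coefficients. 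The two routes are essentially dual to each other: Lemma \ref{lem:frakX*k*} is deduced from Lemma \ref{lem:frakX*}, whose proof rests on exactly the same $\pr$-vanishing you invoke, so neither argument is deeper. Your version is slightly more elementary and self-contained (no dual modules, no $k^*$), whereas the paper's formulation has the advantage that the identity $k^*(X_{I_w}^*)=\frakX_{I_w}^*$ it runs through is reused immediately in the subsequent corollary relating the structure constants $\frakd^{I_{w_3}}_{I_u,I_v}$ to the $c_{I_u,I_v}$.
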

\begin{proof}
If $w\in W_\aff^-$, by Lemma \ref{lem:frakX*k*}, we have
\[
X_{I_w}^*(k(\frakX_{I_u}))=k^*(X_{I_w}^*)((\frakX_{I_u}))=\frakX_{I_w}^*(\frakX_{I_u})=\de_{w,u},
\] Therefore, 
\[
k(\frakX_{I_u})=\sum_{v\in W_\aff}X_{I_v}^*(k(\frakX_{I_u}))X_{I_v}=X_{I_u}+\sum_{v\in W_\aff\backslash W_\aff^-}c_{I_u,I_v}X_{I_v}. 
\]
\end{proof}
\begin{example}Consider the $\hat A_1$ case, then there are two simple roots $\al_1=\al, \al_0=-\al+\de$. By direct computation, we have
\begin{enumerate}
\item $k(\frakX_0)=X_0+X_1-x_{-\al}X_{01}$. 
\item $k(\frakX_{10})=X_{10}-\frac{x_{-\al}}{x_\al} X_{01}$.
\item $k(\frakX_{010})=X_{010}+X_{101}-x_{-\al}X_{1010}$. 
\end{enumerate}
\end{example}
\begin{corollary}Assume $I_w, w\in W_\aff$ is $W$-compatible.  Let $u,v\in W_\aff^-$. Write 
\[X_{I_{u}}X_{I_v}=\sum_{w\in W_\aff}d^{I_w}_{I_u,I_v}X_{I_w}, ~ \frakX_{I_u}\frakX_{I_v}=\sum_{w\in W_\aff^-}\mathfrak{d}^{I_w}_{I_u,I_v}\frakX_{I_w},\]
 then 
\[\frakd^{I_{w_3}}_{I_u,I_v}=\sum_{w_2\in W_\aff}c_{I_u,I_{w_2}}d^{I_{w_3}}_{I_{w_2}, I_{v}}.  \]
\end{corollary}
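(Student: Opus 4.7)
The plan is to evaluate $\pr\bigl(k(\frakX_{I_u})\cdot k(\frakX_{I_v})\bigr)$ in two different ways and compare the expansions in the $S$-basis $\{\frakX_{I_{w_3}}\}_{w_3\in W_\aff^-}$ of $\bfD_{W_\aff/W}$. On the one hand, since $k:\cQ_{Q^\vee}\hookrightarrow\cQ_{W_\aff}$ is a ring homomorphism (being the inclusion of one twisted group algebra into another), we have $k(\frakX_{I_u})\,k(\frakX_{I_v})=k(\frakX_{I_u}\frakX_{I_v})$, and since $\pr\circ k$ is the identity on $\bfD_{W_\aff/W}$, this immediately yields
\[
\pr\bigl(k(\frakX_{I_u})\,k(\frakX_{I_v})\bigr)=\frakX_{I_u}\frakX_{I_v}=\sum_{w_3\in W_\aff^-}\frakd^{I_{w_3}}_{I_u,I_v}\,\frakX_{I_{w_3}}.
\]

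On the other hand, we will use Theorem \ref{thm:KX} to write $k(\frakX_{I_u})=\sum_{w_2\in W_\aff}c_{I_u,I_{w_2}}X_{I_{w_2}}$ (with the convention $c_{I_u,I_u}=1$ and $c_{I_u,I_{w_2}}=0$ for $w_2\in W_\aff^-\setminus\{u\}$), and likewise $k(\frakX_{I_v})=X_{I_v}+\sum_{w\in W_\aff\setminus W_\aff^-}c_{I_v,I_w}X_{I_w}$. The main technical input will be the absorption identity
\[
\pr(z\,X_{I_w})=0\qquad\text{for every }z\in\cQ_{W_\aff}\text{ and every }w\in W_\aff\setminus W_\aff^-,
\]
which follows from Lemma \ref{lem:przero} together with $W$-compatibility of the chosen reduced sequences: writing $w=u_1u_2$ with $u_1\in W_\aff^-$ and $u_2\in W\setminus\{e\}$, one has $X_{I_w}=X_{I_{u_1}}X_{i_1}\cdots X_{i_k}$ for some $i_1,\ldots,i_k\in I$ with $k\ge 1$, and then $\pr(zX_{I_w})=\pr\bigl((zX_{I_{u_1}}X_{i_1}\cdots X_{i_{k-1}})X_{i_k}\bigr)=0$.

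Applying this absorption to the terms of $k(\frakX_{I_v})-X_{I_v}$ multiplied on the left by $k(\frakX_{I_u})$ gives
\[
\pr\bigl(k(\frakX_{I_u})\,k(\frakX_{I_v})\bigr)=\pr\bigl(k(\frakX_{I_u})\,X_{I_v}\bigr)=\sum_{w_2\in W_\aff}c_{I_u,I_{w_2}}\,\pr(X_{I_{w_2}}X_{I_v}).
\]
Expanding $X_{I_{w_2}}X_{I_v}=\sum_{w_3\in W_\aff}d^{I_{w_3}}_{I_{w_2},I_v}X_{I_{w_3}}$ in $\bfD_{W_\aff}$ (with $d^{I_{w_3}}_{I_{w_2},I_v}$ understood as the natural extension of the structure constants to $w_2\in W_\aff$), and using the same absorption together with $\pr(X_{I_{w_3}})=\frakX_{I_{w_3}}$ for $w_3\in W_\aff^-$, we reach
\[
\pr\bigl(k(\frakX_{I_u})\,k(\frakX_{I_v})\bigr)=\sum_{w_3\in W_\aff^-}\Bigl(\sum_{w_2\in W_\aff}c_{I_u,I_{w_2}}\,d^{I_{w_3}}_{I_{w_2},I_v}\Bigr)\frakX_{I_{w_3}}.
\]
Comparing coefficients with the first computation finishes the proof.

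The only step that is not formal bookkeeping is the absorption identity $\pr(zX_{I_w})=0$ for $w\notin W_\aff^-$; once this is granted, everything else is a direct comparison of two expansions with Theorem \ref{thm:KX} providing the translation between the Schubert bases of $\bfD_{W_\aff/W}$ and $\bfD_{W_\aff}$. The absorption is where $W$-compatibility of the reduced sequences plays an essential role, since it guarantees that $X_{I_w}$ literally factors as $(\cdots)\cdot X_{i_k}$ with $i_k\in I$, which is exactly the hypothesis needed to invoke Lemma \ref{lem:przero}.
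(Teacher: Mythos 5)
Your proof is correct, but it runs through a different mechanism than the paper's. The paper applies $k$ to $\frakX_{I_u}\frakX_{I_v}$, expands \emph{both} factors by Theorem \ref{thm:KX}, moves the coefficients $c_{I_v,I_{w_1}}\in S$ past $k(\frakX_{I_u})$ (which implicitly uses that $k(\frakX_{I_u})$ centralizes $S$, i.e.\ Theorem \ref{thm:Peterson}), and then extracts the coefficient $\frakd^{I_{w_3}}_{I_u,I_v}$ by pairing with the dual element $X_{I_{w_3}}^*$, $w_3\in W_\aff^-$; the two inputs there are Lemma \ref{lem:frakX*k*} ($k^*(X^*_{I_{w_3}})=\frakX^*_{I_{w_3}}$) and the vanishing $X_{I_{w_3}}^*(X_{I_{w_2}}X_{I_{w_1}})=0$ for $w_1\notin W_\aff^-$, which is cited from \cite[Theorem 8.2]{CZZ2}. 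You instead apply $\pr$ and never dualize: the non-$W_\aff^-$ terms are killed by your absorption identity $\pr(zX_{I_w})=0$, which you correctly reduce via $W$-compatibility to Lemma \ref{lem:przero}, and the final identification of coefficients uses only the freeness of $\bfD_{W_\aff/W}$ over $S$ on the basis $\{\frakX_{I_w}\}_{w\in W_\aff^-}$. A further small difference is that by keeping the coefficients $c_{I_v,I_w}$ inside the element $z$ of the absorption identity, you never need $k(\frakX_{I_u})$ to commute with $S$. The trade-off: your route is more self-contained within the paper (no appeal to \cite[Theorem 8.2]{CZZ2}, no use of the centralizer property), while the paper's dual-basis computation is essentially the $k^*$/$\pr^*$ formalism it has already set up and is the argument that generalizes to extracting coefficients one at a time. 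Both arguments use $W$-compatibility and Theorem \ref{thm:KX} in the same essential way, so the proofs are dual reformulations of one another, but yours is a genuinely independent and valid derivation.
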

\begin{proof}We have
\begin{eqnarray*}
k(\sum_{w\in W_\aff^-}\mathfrak{d}^{I_w}_{I_u,I_v}\frakX_{I_w})&=&k(\frakX_{I_u}\frakX_{I_v})=k(\frakX_{I_u})k(\frakX_{I_v})=k(\frakX_{I_u})\sum_{w_1\in W_\aff}c_{I_v,I_{w_1}}X_{I_{w_1}}\\
&=&\sum_{w_1\in W_\aff}c_{I_v,I_{w_1}}k(\frakX_{I_u})X_{I_{w_1}}=\sum_{w_1,w_2\in W_\aff}c_{I_v,I_{w_1}}c_{I_u,I_{w_2}}X_{I_{w_2}}X_{I_{w_1}}.
\end{eqnarray*}
Let $w_3\in W_\aff^-$. By \cite[Theorem 8.2]{CZZ2}, we know that $X_{I_{w_3}}^*(X_{I_{w_2}}X_{I_{w_1}})=0$ unless $w_1\in W_\aff^-$, in which case $c_{I_v,I_{w_1}}=\de^{\Kr}_{v,w_1}$ by Theorem \ref{thm:KX}. Therefore, applying $X_{I_{w_3}}^*, w_3\in W_\aff^-$, and using Lemma \ref{lem:frakX*k*}, we get 
\begin{eqnarray*}
\frakd^{I_{w_3}}_{I_u,I_v}&=&\frakX_{I_{w_3}}^*(\sum_{w\in W_\aff^-}\frakd^{I_w}_{I_u,I_v}\frakX_{I_w})=k^*(X_{I_{w_3}}^*)(\sum_{w\in W_\aff^-}\frakd^{I_w}_{I_u,I_v}\frakX_{I_w})\\
&=&X_{I_{w_3}}^*(k(\sum_{w\in W_\aff^-}\frakd^{I_w}_{I_u, I_v}\frakX_{I_w}))=X^*_{I_{w_3}}(\sum_{w_1,w_2\in W_\aff}c_{I_v,I_{w_1}}c_{I_u,I_{w_2}}X_{I_{w_2}}X_{I_{w_1}})\\
&=&\sum_{w_2\in W_\aff}c_{I_u, I_{w_2}}X_{I_{w_3}}^*(X_{I_{w_2}}X_{I_v})=\sum_{w_2\in W_\aff}c_{I_u,I_{w_2}}d^{I_{w_3}}_{I_{w_2}, I_{v}}. 
\end{eqnarray*}
\end{proof}

\section{Appendix: Restriction formula  in the $\hat A_1$ case}

In this Appendix, we perform some computation in the $\hat A_1$ case. 

\subsection{}
In this case, there are two  simple roots, $\al_1=\al, \al_0=-\al+\de$, and any $w\in W_\aff$ has a unique reduced decomposition, so $X_{I_w}, Y_{I_w}$ can be denoted by $X_w, Y_w$, respectively. Moreover, $X_i^2=\kappa_\al X_i. $
We use the notation as in \cite[\S4.3]{LSS10}. Let 
\[\sigma_0=e, ~\sigma_{2i}=(s_1s_0)^i=t_{-i\al^\vee},~ \sigma_{-2i}=(s_0s_1)^i=t_{i\al^\vee},~ \sigma_{2i+1}=s_0\sigma_{2i}, ~\sigma_{-(2i+1)}=s_1\sigma_{-2i}, ~i\ge 1, \]
 and $W^-_\aff=\{\sigma_i|i\ge 0\}$. 
Denote $\mu=-\frac{x_{-1}}{x_1}$. So if $F=F_\fc$ with $\fc=0$, then $\mu=1$, and if $F=F_\fc$ with $\fc=1$, then $\mu=e^\al$ if one identifies $x_\al$ with $1-e^{-\al}$. 

Let $S_{\le a}$ be the sum $h_0+h_1+\cdots +h_a$ of homogeneous symmetric functions. Denote $S^i_{\le a}$ to be $S_{\le a}(x,x,\cdots, x)$ where there are $i$ copies of $x$. For instance, $S^3_{\le 3}(x)=1+3x+6x^2+10x^3$. We have the following identities:
\[
S^i_{\le a}(x)=xS^i_{\le a-1}(x)+S^{i-1}_{\le a}(x), \quad S^i_{\le a}(x)=\sum_{j=0}^ax^j\begin{pmatrix}j+i-1\\i-1\end{pmatrix}.
\]
Then the following identities can be verified by direct computation for lower $k$ and then continued with  induction: 
\begin{eqnarray*}
\eta_{\sigma_{2k}}&=&1+x_1^{2k}X_{\sigma_{2k}}+\sum_{1\le j\le k-1}x_1^{2j}(S^{2j}_{\le k-j}(\mu^{-1})X_{\sigma_{2j}}+S^{2j}_{\le k-j-1}(\mu^{-1})X_{\sigma_{-2j}})\\
&&-\sum_{1\le i\le k}x_1^{2i-1}S^{2i-1}_{\le k-i}(\mu^{-1})(X_{\sigma_{2i-1}}+X_{\sigma_{-2i+1}}), \\
\eta_{\sigma_{-2k}}&=&1+x_{-1}^{2k}X_{\sigma_{-2k}}+\sum_{1\le j\le k-1}x_{-1}^{2j}(S^{2j}_{\le k-j-1}(\mu)X_{\sigma_{2j}}+S^{2j}_{\le k-j}(\mu)X_{\sigma_{-2j}})\\
&&-\sum_{1\le i\le k}x_{-1}^{2i-1}S^{2i-1}_{\le k-i}(\mu)(X_{\sigma_{2i-1}}+X_{\sigma_{-2i+1}}), \\
\eta_{\sigma_{-2k-1}}&=&1-x_{1}^{2k+1}X_{\sigma_{-2k-1}}+\sum_{1\le j\le k}x_{1}^{2j}S^{2j}_{\le k-j}(\mu^{-1})(X_{\sigma_{2j}}+X_{\sigma_{-2j}})\\
&&-\sum_{1\le i\le k}x_{1}^{2i-1}\left(S^{2i-1}_{\le k-i}(\mu^{-1})X_{\sigma_{2i-1}}+S^{2i-1}_{\le k-i+1}(\mu^{-1})X_{\sigma_{-2i+1}}\right), \\
\eta_{\sigma_{2k+1}}&=&1-x_{-1}^{2k+1}X_{\sigma_{2k+1}}+\sum_{1\le j\le k}x_{-1}^{2j}S^{2j}_{\le k-j}(\mu)(X_{\sigma_{2j}}+X_{\sigma_{-2j}})\\
&&-\sum_{1\le i\le k}x_{-1}^{2i-1}\left(S^{2i-1}_{\le k-i+1}(\mu)X_{\sigma_{2i-1}}+S^{2i-1}_{\le k-i}(\mu)X_{\sigma_{-2i+1}}\right). 
\end{eqnarray*}
For $F=F_\fc$ with $\fc=1$, that is, in the K-theory case, these identities specializes to the corresponding ones in \cite[(4.5), (4.6)]{LSS10} after identifying our $-X_{-\al_i}$ with $T_i$ in \cite{LSS10} (see Remark \ref{rm:identifyK}). 
By using these identities, following the same idea as in \cite[\S4.3]{LSS10}, one can prove that $X_{I_w}^*$ satisfy the small torus GKM conditions in Theorem \ref{thm:GKM}. 

\subsubsection*{Acknowledge} The author would like to thank Cristian Lenart, Changzheng Li and  Gufang Zhao for helpful discussions.

\newcommand{\arxiv}[1]
{\texttt{\href{http://arxiv.org/abs/#1}{arXiv:#1}}}
\newcommand{\doi}[1]
{\texttt{\href{http://dx.doi.org/#1}{doi:#1}}}
\renewcommand{\MR}[1]
{\href{http://www.ams.org/mathscinet-getitem?mr=#1}{MR#1}}

\end{document}